\newtheorem{thm}{Theorem}[section]
\newtheorem{cor}[thm]{Corollary}
\newtheorem{lem}[thm]{Lemma}
\newtheorem{defn}[thm]{Definition}
\newtheorem{prop}[thm]{Proposition}
\numberwithin{equation}{section}
\def\P{\mathbf{P}}
\def\E{\mathbf{E}}
\def\V{\mathbf{V}}
\def\unogrande{\text{\large\bf 1}}
\def\uno{\text{\bf 1}}
\begin{document}
\title{Visible lattice points in random walks}

\author{Javier Cilleruelo}

\address{Instituto de Ciencias Matem\'aticas (ICMAT) and Departamento de Matem\'aticas,
Universidad Aut\'onoma  de Madrid, 28049 Madrid, Spain.}
\email{franciscojavier.cilleruelo@uam.es}

\author{Jos\'{e} L. Fern\'andez}
\address{Departamento de Matem\'aticas, Universidad Aut\'onoma  de Madrid, 28049 Madrid, Spain.}
\email{joseluis.fernandez@uam.es}

\author{Pablo Fern\'andez}
\address{Departamento de Matem\'aticas, Universidad Aut\'onoma  de Madrid, 28049 Madrid, Spain.}
\email{pablo.fernandez@uam.es}
\thanks{Javier Cilleruelo has been supported by MINECO project MTM2014-56350-P  and ICMAT	Severo Ochoa project SEV-2011-0087. Jos\'{e} L. Fern\'{a}ndez and Pablo Fern\'{a}ndez are partially supported by Fundaci\'{o}n Akusmatika.}

\begin{abstract}
We consider the possible visits to visible points of a random walker moving up and right in the integer lattice (with probability $\alpha$ and $1-\alpha$, respectively) and starting from the origin. 


We show that, almost surely, the asymptotic proportion of strings of $k$ consecutive \textit{visible} lattice points visited by such an $\alpha$-random walk is a certain constant $c_k(\alpha)$, which is actually an (explicitly calculable) polynomial in $\alpha$ of degree $2\lfloor(k-1)/2\rfloor $. For $k=1$, this gives that, almost surely, the asymptotic proportion of time the random walker is visible from the origin is $c_1(\alpha)=6/\pi^2$, independently of $\alpha$.
\end{abstract}

\subjclass[2010]{11A5, 60G50}
\keywords{Ramdom walk in the lattice, visibility.}


\maketitle

\section{Introduction}
Fix $\alpha\in (0,1)$ and consider an $\alpha$-random walk in the two-dimensional lattice, starting at $P_0=(0,0)$, and given, for $i\ge 0$, by
\begin{equation}\label{def alphaRW}
P_{i+1}=P_i+\begin{cases}(1,0) \text{ with probability }\alpha,
\\
(0,1) \text{ with probability }1-\alpha,\end{cases}
\end{equation}
where the steps are independent. Notice that only steps $(1,0)$ and $(0,1)$ are allowed.

We are interested in estimating the proportion of visible lattice points (and more generally, the proportion of strings of $k$ consecutive visible lattice points) visited by such an $\alpha$-random walk. Recall that $(a,b)$ is visible (from the origin) if and only if $\gcd(a,b)=1$.

Associated to the $\alpha$-random walk, consider the sequence $(X_i)_{i\ge 1}$ of Bernoulli random variables given by
$$
X_i=
\begin{cases}1,\quad\text{if } P_i \text{ is visible,}
\\ 0,\quad\text{if not,} \end{cases}
$$
and write
$$
\overline{S}_n=\frac{X_1+\cdots +X_n}n
$$
for the variable that registers the proportion of visible points visited by the $\alpha$-random walk in the first $n$ steps. Observe that the variables $X_i$ are not independent.

 Our first result reads:
\begin{thm}\label{main}For any $\alpha\in (0,1)$,
$$
\lim_{n\to \infty}\overline{S}_n=\frac{6}{\pi^2}
$$ almost surely.
\end{thm}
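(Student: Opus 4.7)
The plan is to combine M\"obius inversion with a roots-of-unity filter. Write $A_i$ for the number of $(1,0)$-steps among the first $i$ moves, so $A_i \sim \mathrm{Bin}(i,\alpha)$ and $P_i = (A_i, i-A_i)$. Since $\gcd(A_i, i-A_i) = \gcd(A_i, i)$, one has
$$X_i = \sum_{d \mid \gcd(A_i,i)} \mu(d) = \sum_{d \mid i} \mu(d)\,\uno_{d \mid A_i},$$
and the standard roots-of-unity formula yields
$$\P(d \mid A_i) = \frac{1}{d}\sum_{j=0}^{d-1}(1-\alpha+\alpha\,\omega_d^j)^i = \frac{1}{d} + r_{d,i},$$
where $\omega_d$ is a primitive $d$-th root of unity and $|r_{d,i}| \le \rho_d^{\,i}$ with $\rho_d = \max_{1 \le j \le d-1}|1-\alpha+\alpha\,\omega_d^j| < 1$. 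Summing divisor-by-divisor gives $\E[X_i] = \varphi(i)/i + E_i$ with $E_i = \sum_{d \mid i} \mu(d)\, r_{d,i}$.

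Next I would show that $\E[\overline{S}_n] \to 6/\pi^2$. The main term is handled by the classical Ces\`aro identity $\tfrac{1}{n}\sum_{i \le n}\varphi(i)/i \to 6/\pi^2$. The subtlety is that $\rho_d \to 1$ as $d \to \infty$, so the $r_{d,i}$ are not uniformly small in $d$; I would split the divisors of $i$ at a threshold $D$, bounding $d \le D$ uniformly by $\rho_D^{\,i}$ and handling $d > D$ via $|r_{d,i}| \le 1$, $\tau(i) \ll i^{\varepsilon}$, and a Chernoff-type tail bound on $\P(d \mid A_i)$ (which is tiny unless $d$ is small compared with $\alpha i$). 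Optimizing $D$ as a function of $i$ then forces $\tfrac{1}{n}\sum_{i \le n}|E_i| \to 0$.

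Finally, to promote this to an almost-sure statement I would estimate $\mathrm{Var}(\overline{S}_n)$. Expanding $\E[X_i X_j]$ via the same M\"obius formula and using, for $j > i$, that $A_j - A_i$ is an independent $\mathrm{Bin}(j-i,\alpha)$, a double roots-of-unity filter yields
$$\P(d \mid A_i,\ e \mid A_j) = \frac{1}{de} + (\text{terms exponentially small in } i \text{ or in } j-i).$$
Hence $\mathrm{Cov}(X_i, X_j)$ is tiny and $\mathrm{Var}(\overline{S}_n) = O(1/n)$ (or better with more care). The central technical hurdle, in both the first-moment and the second-moment estimates, is the same non-uniform decay $\rho_d \to 1$, which prevents a one-shot bound on the divisor sum and necessitates the truncation argument above. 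Once this is in place, I would close the argument by Chebyshev plus Borel--Cantelli along the subsequence $n_k = k^2$ (where $\mathrm{Var}(\overline{S}_{n_k}) = O(1/k^2)$ is summable), interpolating to general $n$ via the deterministic bound $|\overline{S}_n - \overline{S}_{n_k}| \le 2(n_{k+1} - n_k)/n_k = O(1/k)$ that follows from $X_i \in \{0,1\}$.
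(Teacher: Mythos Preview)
Your overall architecture matches the paper's (M\"obius inversion, first- and second-moment estimates, then Borel--Cantelli along a polynomial subsequence with interpolation). The gap is in your control of $r_{d,i}=\P(d\mid A_i)-1/d$. Bounding it by $\rho_d^{\,i}$ is useless once $d\gtrsim\sqrt i$, since $\rho_d\approx1-c/d^2$; and your proposed patch (truncate at $D$, use Chernoff above) does not close the hole. A Chernoff bound makes $\P(d\mid A_i)$ tiny only when every multiple of $d$ in $[0,i]$ lies in a tail of $\mathrm{Bin}(i,\alpha)$, i.e.\ essentially only for $d\gtrsim\alpha i$. For divisors in the intermediate range $\sqrt i\lesssim d\lesssim\alpha i$ there are multiples of $d$ sitting right at the mode $\alpha i$, so $\P(d\mid A_i)\asymp1/\sqrt i$ and neither $\rho_d^{\,i}$ nor Chernoff helps. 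The same non-uniformity undercuts your claim $\V(\overline S_n)=O(1/n)$; the paper only obtains $\V(\overline S_n)\ll n^{-1/2}\log n$, which in turn forces a sparser subsequence than $n_k=k^2$.

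What is missing is the uniform estimate $|r_{d,i}|=O(1/\sqrt i)$, valid for all $d\le i$. The paper proves this by the unimodality of $l\mapsto\binom il\alpha^l(1-\alpha)^{i-l}$ together with the Stirling bound $\binom i{l_0}\alpha^{l_0}(1-\alpha)^{i-l_0}\ll1/\sqrt i$ at the peak (their Lemma~2.1). You can equally extract it from your own roots-of-unity formula, provided you keep the whole sum rather than only its largest term:
\[
|r_{d,i}|\le\frac1d\sum_{j=1}^{d-1}\Bigl(1-2\alpha(1-\alpha)\bigl(1-\cos\tfrac{2\pi j}{d}\bigr)\Bigr)^{i/2}\le\frac2d\sum_{j\ge1}e^{-c\,ij^2/d^2}\ll\frac1{\sqrt i}.
\]
With this in hand no truncation is needed: $|E_i|\le\tau(i)/\sqrt i$ gives $\sum_{i\le n}|E_i|\ll\sqrt n\log n$, and the covariance calculation goes through with error terms $O(\tau(j)/\sqrt{j-i})+O(\tau(i)/\sqrt i)$, exactly as in the paper.
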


This result agrees with intuition, as Dirichlet's classical result claims that the probability that a random lattice point is visible is asymptotically $6/\pi^2$.
However, if instead of the set of visible points,  we take an arbitrary subset $\mathcal{B}$ in the lattice with positive density $\beta>0$, then the analogous to Theorem \ref{main} does not hold in general. In fact it is easy to construct a subset $\mathcal{B}$ of the lattice, with asymptotic density $1$, such that, for all $\alpha\in[0,1]$, the $\alpha$-random walk avoids $\mathcal{B}$ almost surely. On the other hand, if $\mathcal{B}$ is a subset of $\mathbb{N}$ with asymptotic density $\beta>0$, then the one-dimensional $\alpha$-random walk defined by $P_0=0$ and
$$
P_{i+1}=P_i+\begin{cases}1 \text{ with probability }\alpha,
\\
0 \text{ with probability }1-\alpha,\end{cases}
$$
stays on $\mathcal{B}$ an asymptotic proportion $\beta$ of the time.

The proof of Theorem~\ref{main} relies on number-theoretical estimates of the mean and the variance of~$\overline{S}_n$.
See Section~\ref{sec:2}.

Our main result concerns with the proportion of $k$ consecutive visible lattice points visited. Interestingly, this proportion depends on $\alpha$ for $k\ge 3$.

Define, for $k\ge 1$, the random variable
$$
\overline S_{n,k}=\frac{X_1\cdots X_k+\cdots + X_n\cdots X_{n+k-1}}{n}
$$
that registers the proportion of $k$ consecutive visible lattice points in the first $n+k-1$ steps in an $\alpha$-random walk.

\begin{thm}\label{Sk}For any $\alpha\in (0,1)$,
$$
\lim_{n\to \infty}\overline S_{n,k}=c_k(\alpha)\quad\text{almost surely,}
$$ 	
 where
$$
c_k(\alpha)=b_k(\alpha)\prod_{p\ge k}\Big(1-\frac k{p^2}\Big)
$$
and $b_k(\alpha)$ is a polynomial in $\alpha$ with rational coeficientes  and  degree $2\lfloor (k-1)/2\rfloor$ that can be explicitly calculated.
\end{thm}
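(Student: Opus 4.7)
The plan is to mirror the strategy of Theorem~\ref{main}, combining a decomposition over the shape of the walk path with a local-global density calculation at each prime.

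\emph{Step 1: Expectation.} For each $i$, condition on the shape $\sigma\in\{R,U\}^{k-1}$ of the $k-1$ steps taken after $P_i$; this shape has probability $\alpha^{r(\sigma)}(1-\alpha)^{k-1-r(\sigma)}$ independently of $P_i$, where $r(\sigma)$ is the number of right steps in $\sigma$. Conditional on $\sigma$ and on $P_i=(a,b)$, the points $P_i,\dots,P_{i+k-1}$ are deterministic translates of $(a,b)$, so $X_i\cdots X_{i+k-1}$ equals the indicator that $k$ explicit shifts of $(a,b)$ are simultaneously primitive. Applying the M\"obius identity $\mathbf{1}[\gcd(x,y)=1]=\sum_{d\mid\gcd(x,y)}\mu(d)$ to each of the $k$ visibility conditions, summing over $i\le n$, and invoking classical mean-value estimates for the random walk position modulo small integers in the spirit of Section~\ref{sec:2}, one obtains
$$
\lim_{n\to\infty}\E\bigl[\overline{S}_{n,k}\bigr]=\sum_\sigma \alpha^{r(\sigma)}(1-\alpha)^{k-1-r(\sigma)}\,\delta(\sigma),
$$
where $\delta(\sigma)$ is the natural density of starting points $(a,b)$ at which the full $\sigma$-path is visible from the origin.

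\emph{Step 2: Local-global factorisation and the polynomial $b_k$.} Inclusion-exclusion factors $\delta(\sigma)=\prod_p \lambda_p(\sigma)$, where $\lambda_p(\sigma)$ is the proportion of classes $(a,b)\in(\Z/p\Z)^2$ such that no point of the $\sigma$-path lies in $(0,0)\bmod p$. Because consecutive points of the walk differ by a single unit step, the $k$ points of any $\sigma$-path are pairwise distinct modulo every prime $p\ge k$, so $\lambda_p(\sigma)=1-k/p^2$ uniformly in $\sigma$ whenever $p\ge k$. Factoring this universal tail out gives
$$
c_k(\alpha)=\Bigl[\sum_\sigma \alpha^{r(\sigma)}(1-\alpha)^{k-1-r(\sigma)}\prod_{p<k}\lambda_p(\sigma)\Bigr]\prod_{p\ge k}\Bigl(1-\tfrac{k}{p^2}\Bigr),
$$
which identifies the bracketed sum as $b_k(\alpha)$. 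Each factor $\lambda_p(\sigma)$ with $p<k$ is rational (a ratio of finite residue counts), so $b_k\in\mathbb{Q}[\alpha]$. Moreover, the involution $\sigma\mapsto\sigma^*$ that swaps $R$ and $U$ is geometrically the diagonal reflection of $\Z^2$, which preserves visibility; hence $\lambda_p(\sigma^*)=\lambda_p(\sigma)$ for every $p$ and therefore $b_k(\alpha)=b_k(1-\alpha)$. Writing $b_k$ in the monomial basis and pairing $\alpha^r(1-\alpha)^{k-1-r}$ with $\alpha^{k-1-r}(1-\alpha)^r$, the coefficient of $\alpha^{k-1}$ becomes proportional to $(-1)^r+(-1)^{k-1-r}$, which vanishes precisely when $k$ is even; this gives $\deg b_k\le 2\lfloor(k-1)/2\rfloor$, matching the claimed bound.

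\emph{Step 3: Variance and almost sure convergence.} Following the template of Section~\ref{sec:2}, the variance of $\overline{S}_{n,k}$ is controlled by estimating the covariance of $X_i\cdots X_{i+k-1}$ and $X_j\cdots X_{j+k-1}$ for $i<j$. Expanding both products by M\"obius produces a double sum over moduli $(q_1,q_2)$ whose inner term is a joint divisibility probability; joint equidistribution of $(P_i\bmod q_1,P_j\bmod q_2)$ provides, for $q_1q_2\le n^{\delta}$, an approximate factorisation with polynomially small error. Truncating the M\"obius sum at $D=n^\delta$ and handling the tail by a trivial bound yields $\V\bigl(\overline{S}_{n,k}\bigr)=O(n^{-\eta})$ for some $\eta>0$. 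A Chebyshev-Borel-Cantelli argument along $n_m=m^2$, combined with the Lipschitz bound $|\overline{S}_{n+1,k}-\overline{S}_{n,k}|=O(1/n)$ to interpolate between subsequence times, then upgrades mean convergence to almost sure convergence.

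The main obstacle is Step~3: it requires a joint equidistribution statement for the random walk modulo two distinct moduli at two nearby times, uniform as the moduli grow polynomially with $n$, together with simultaneous control of the number-theoretic and probabilistic error terms. Step~2, by contrast, is a finite combinatorial enumeration at each prime $p<k$, and Step~1 is then essentially routine given the mean-value estimates developed for Theorem~\ref{main}.
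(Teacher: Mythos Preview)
Your identification of $c_k(\alpha)$ in Steps~1--2 matches the paper's Lemma~\ref{lemma:promedio ck} almost exactly: decompose over step-patterns $\mathbf s\in\mathcal S_{k-1}$, compute the local density at each prime, separate the tail $p\ge k$ (where it is always $1-k/p^2$) from the finitely many small primes, and use the $\mathbf s\leftrightarrow\overline{\mathbf s}$ symmetry to get the degree bound. So far so good.

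Where you diverge from the paper, and where the real gap lies, is Step~3. You propose to control $\V(\overline S_{n,k})$ directly by expanding all $2k$ visibility indicators via M\"obius and truncating at moduli $\le n^\delta$; you yourself flag that this needs joint equidistribution of $(P_i\bmod q_1,\,P_j\bmod q_2)$ uniformly in moduli growing with $n$. That estimate is not available from Section~\ref{sec:2}: Lemma~\ref{lema1} gives an error $O(1/\sqrt{\alpha(1-\alpha)n})$ \emph{per residue class}, with no saving in the modulus, so after summing over $k$-tuples of divisors up to $n^\delta$ the error swamps the main term. (Already your Step~1 appeal to ``classical mean-value estimates \ldots\ in the spirit of Section~\ref{sec:2}'' is optimistic: Lemma~\ref{imp} treats a single $\gcd$ condition, and the $k$-fold analogue is precisely what the paper avoids.) The paper sidesteps this entirely by a level-truncation trick: fix $m$, replace ``visible'' by ``visible at level $m$'' (i.e.\ $p$-visible for all primes $p<m$), and prove $\overline S_{n,k}(m)\to c_k(m;\alpha)$ almost surely. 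Because $D_m=\prod_{p<m}p$ is now a \emph{fixed} modulus independent of $n$, the expectation and variance estimates (Propositions~\ref{E} and~\ref{Var}) follow directly from Lemma~\ref{lema1} with no truncation issues. The passage from level $m$ to full visibility is then handled by the sandwich Lemma~\ref{dd}, whose upper and lower bounds differ by $k\overline Z_n(m)$, and Corollary~\ref{ZnM} shows $\overline Z_n(m)\to\prod_{p<m}(1-1/p^2)-6/\pi^2$ almost surely---this last step is exactly where Theorem~\ref{main} (the $k=1$ case, proved separately by the direct M\"obius method you sketch) is invoked. Letting $m\to\infty$ squeezes the limit to $c_k(\alpha)$. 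The virtue of the paper's route is that it never needs uniformity in growing moduli; the price is that the $k=1$ case must be established first by a different argument.
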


The first cases of $b_k(\alpha)$ in Theorem \ref{Sk} are
$$
b_1(\alpha)=1,\quad b_2(\alpha)=1,\quad b_3(\alpha)=\frac{1-\alpha+\alpha^2}2,\quad b_4(\alpha)=\frac{6-13\alpha+13\alpha^2}{18}.
$$
For instance, for $\alpha=1/2$, the first probabilities above are $c_1(1/2)=6/\pi^2\approx 0.6079,\ c_2(1/2)\approx 0.3226,\ c_3(1/2)\approx 0.1882$, and $c_4(1/2)\approx 0.1041$.

The case $k=1$ in Theorem \ref{Sk} is actually Theorem \ref{main}, but the latter has to be proved separately, as it is used in the proof of the former (see Section~\ref{sec4}). The approach to prove Theorem \ref{Sk} is different from that used in Theorem~\ref{main}, and has the same flavour as that used in \cite{Ci}.

\smallskip
Theorem \ref{Sk} has some direct and amusing consequences.

 We say that a sequence of $k$ lattice points  $P_i,\dots,P_{i+k-1}$ is a run of exactly $k$ visible lattice points (in a sequence $P_0,P_1,P_2,\dots,$) if $P_i,\dots,P_{i+k-1}$ are visible but $P_{i-1}$ and $P_{i+k}$ are not visible. In other words, if $(1-X_{i-1})X_i\cdots X_{i+k-1}(1-X_k)=1$.
\begin{cor}
In an $\alpha$-random walk, $\alpha\in (0,1)$, the proportion of runs of exactly $k$ consecutive visible lattice points tends to $c_k(\alpha)-2c_{k+1}(\alpha)+c_{k+2}(\alpha)$ almost surely.
\end{cor}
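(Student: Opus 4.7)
The plan is to write the indicator that a run of exactly $k$ visible points starts at position $i$ as an inclusion--exclusion combination of indicators of $k$, $k+1$ and $k+2$ consecutive visible points, and then to invoke Theorem~\ref{Sk} on each piece.

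More precisely, with the convention $X_0=0$ (since $P_0=(0,0)$ is not visible), let
$$
I_i^{(k)}=(1-X_{i-1})\, X_i X_{i+1}\cdots X_{i+k-1}\,(1-X_{i+k}),\qquad i\ge 1,
$$
so that $I_i^{(k)}=1$ if and only if $P_i,\dots,P_{i+k-1}$ is a run of exactly $k$ visible points. The number of runs of exactly $k$ visible points among $P_1,\dots,P_n$ is then $R_{n,k}=\sum_{i=1}^n I_i^{(k)}$ (up to an $O(1)$ boundary correction that is irrelevant after dividing by $n$), and the proportion in question is $R_{n,k}/n$.

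Expanding the product $(1-X_{i-1})(1-X_{i+k})$ yields
$$
I_i^{(k)}=X_i\cdots X_{i+k-1}-X_{i-1}X_i\cdots X_{i+k-1}-X_i\cdots X_{i+k-1}X_{i+k}+X_{i-1}X_i\cdots X_{i+k-1}X_{i+k}.
$$
Summing over $1\le i\le n$ and dividing by $n$, and shifting indices where needed, the first sum is exactly $\overline S_{n,k}$, the second and third sums equal $\overline S_{n,k+1}$ up to an additive $O(1/n)$ boundary term, and the fourth sum equals $\overline S_{n,k+2}$ up to $O(1/n)$. Hence
$$
\frac{R_{n,k}}{n}=\overline S_{n,k}-2\,\overline S_{n,k+1}+\overline S_{n,k+2}+O(1/n).
$$

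Finally, by Theorem~\ref{Sk} each of the three random sequences $\overline S_{n,k}$, $\overline S_{n,k+1}$, $\overline S_{n,k+2}$ converges almost surely to $c_k(\alpha)$, $c_{k+1}(\alpha)$, $c_{k+2}(\alpha)$ respectively; on the intersection of these three probability--one events (still of probability one), we conclude that $R_{n,k}/n\to c_k(\alpha)-2c_{k+1}(\alpha)+c_{k+2}(\alpha)$ almost surely.

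There is no real obstacle here: the result is a short combinatorial identity combined with Theorem~\ref{Sk}. The only point requiring minimal care is the bookkeeping with shifted indices and the observation that $X_0=0$, which makes the boundary terms contribute $O(1/n)$ and therefore vanish in the limit.
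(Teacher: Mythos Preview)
Your proof is correct and follows exactly the approach the paper intends: the paper states the corollary as a direct consequence of Theorem~\ref{Sk}, having already written out the indicator $(1-X_{i-1})X_i\cdots X_{i+k-1}(1-X_{i+k})$, and you have simply carried out the obvious expansion and index shift. There is nothing to add.
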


We say that $P_i$ represents a change of visibility (in $P_0,P_1,P_2,\dots,$) if $P_{i-1}$ or $P_i$ are visible, but not both.  In other words, if $(X_{i-1}-X_i)^2=1$.
\begin{cor}In an $\alpha$-random walk with $\alpha\in (0,1)$, the proportion of changes of visibility tends to $2c_1(\alpha)-2c_2(\alpha)=12/\pi^2-2\prod_{p} (1-2/p^{2})$ almost surely.
	\end{cor}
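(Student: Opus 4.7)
The proof is a short deduction from Theorems \ref{main} and \ref{Sk}. The proportion of changes of visibility in the first $n$ steps is, by definition,
$$
R_n:=\frac1n\sum_{i=1}^n (X_{i-1}-X_i)^2.
$$
Since each $X_i$ takes values in $\{0,1\}$, we have $X_i^2=X_i$ and therefore
$$
(X_{i-1}-X_i)^2=X_{i-1}+X_i-2X_{i-1}X_i.
$$
Summing and dividing by $n$,
$$
R_n = \frac{1}{n}\sum_{i=1}^n X_{i-1}+\frac{1}{n}\sum_{i=1}^n X_{i} - \frac{2}{n}\sum_{i=1}^n X_{i-1}X_i.
$$

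The first plan is to recognize each of these three averages as (up to a boundary term of size $O(1/n)$) one of the averages $\overline S_{n,1}=\overline S_n$ or $\overline S_{n,2}$ introduced before Theorems \ref{main} and \ref{Sk}. The first two averages differ from $\overline S_n$ only by the contribution of $X_0$, which contributes $O(1/n)$ and is irrelevant in the limit, so by Theorem \ref{main} both converge almost surely to $c_1(\alpha)=6/\pi^2$. The third average is $\overline S_{n,2}$ shifted by one index, and so by Theorem \ref{Sk} with $k=2$ it converges almost surely to $c_2(\alpha)$. Combining the three limits on the intersection of the three full-measure sets gives, almost surely,
$$
\lim_{n\to\infty} R_n = 2c_1(\alpha)-2c_2(\alpha).
$$

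Finally, one identifies the constant. From the formula in Theorem \ref{Sk} together with $b_2(\alpha)=1$ (stated immediately after Theorem \ref{Sk}), one has
$$
c_2(\alpha)=\prod_{p\ge 2}\Bigl(1-\frac{2}{p^2}\Bigr)=\prod_{p}\Bigl(1-\frac{2}{p^2}\Bigr),
$$
independent of $\alpha$, and $2c_1(\alpha)=12/\pi^2$, yielding the claimed expression $12/\pi^2-2\prod_p(1-2/p^2)$. There is no real obstacle in this argument; the only thing to verify carefully is the $0/1$-indicator identity $(a-b)^2=a+b-2ab$ and the harmless index shift, so that the previously established almost-sure convergence statements can be applied termwise.
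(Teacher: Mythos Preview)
Your argument is correct and is precisely the intended one: the paper states this corollary without proof, calling it a ``direct consequence'' of Theorem~\ref{Sk}, and the expansion $(X_{i-1}-X_i)^2=X_{i-1}+X_i-2X_{i-1}X_i$ together with the almost-sure limits $\overline S_n\to c_1(\alpha)$ and $\overline S_{n,2}\to c_2(\alpha)$ (plus the harmless $O(1/n)$ boundary terms from the index shift and from $X_0$) is exactly what is meant.
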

	
We can associate, to each real number $x\in [0,1)$, expressed in binary form, an infinite walk, starting at $P_0=(0,0)$, by identifying each $1$ of its binary representation with the step $(1,0)$, and each $0$ with the step $(0,1)$. To avoid ambiguities we do not consider representations where all the digits are $1$ from some place onwards. Define now
$$
\overline{S}_n(x)=\frac{X_1+\cdots +X_n}n,
$$
 where $X_i=1$ if the the  lattice point $P_i$ in this walk is visible, and $X_i=0$ otherwise. Theorem~\ref{main} for $\alpha=1/2$ can be rephrased as:
 $$
 \lim_{n\to \infty}\overline S_n(x)=6/\pi^2
 $$
 for all $x\in[0,1)$ except for an exceptional  set of measure zero.

It is natural to ask wether rational numbers belong to this exceptional set or not. The following result settles the question for this special  case of walks with a periodic pattern.
\begin{thm}\label{main2}
If $x\in [0,1)$ is rational,  $ \lim_{n\to \infty}\overline S_n(x) $ is a rational number.
\end{thm}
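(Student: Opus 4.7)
The plan is to use the eventual periodicity of the binary expansion of $x$ to reduce Theorem~\ref{main2} to a rationality statement about arithmetic densities of lattice points on a line.

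Since $x\in[0,1)$ is rational, its binary expansion is eventually periodic, with some preperiod length $\ell\ge 0$ and period length $d\ge 1$. Let $(a,b)$, with $a+b=d$, denote the net displacement of the walk over one full period, and let $(a_j,b_j)$, with $(a_0,b_0)=(0,0)$, be the partial displacements inside one period. Writing $P_\ell=(x_0,y_0)$, we have
$$
P_{\ell+md+j}=(x_0+a_j+ma,\ y_0+b_j+mb),\qquad m\ge 0,\ 0\le j<d.
$$
The preperiod contributes only $O(1)$ to $X_1+\cdots+X_n$, so
$$
\lim_{n\to\infty}\overline{S}_n(x)=\frac{1}{d}\sum_{j=0}^{d-1}\delta_j,
$$
where $\delta_j$ is the density of $m\ge 0$ for which $(A_j+ma,\ B_j+mb)$ is visible, with $A_j=x_0+a_j$ and $B_j=y_0+b_j$---provided each $\delta_j$ exists and is rational.

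It therefore suffices to prove: for integers $A,B$ and $a,b\ge 0$ with $a+b\ge 1$, the density $\delta$ of $m\ge 0$ satisfying $\gcd(A+ma,B+mb)=1$ is a well-defined rational number. I would prove this by a prime-by-prime analysis. Set $g=\gcd(a,b)$ and $D=Ab-Ba$. For a prime $p$: if $p\mid g$, then $p$ divides both $A+ma$ and $B+mb$ for every $m$ (when $p\mid\gcd(A,B)$) or for no $m$ (otherwise); if $p\nmid g$, say $p\nmid a$, then $p\mid A+ma$ pins $m$ to a single residue class mod $p$, at which $p\mid B+mb$ iff $p\mid D$. Hence if some prime divides $\gcd(a,b,A,B)$, then $\delta=0$; otherwise only finitely many primes (the divisors of $D$ coprime to $g$) impose a nontrivial constraint, and the Chinese remainder theorem gives
$$
\delta=\prod_{\substack{p\mid D\\ p\nmid g}}\Bigl(1-\frac{1}{p}\Bigr)\in\mathbb{Q}.
$$
The remaining degenerate case $D=0$ with $\gcd(a,b,A,B)=1$ is handled by writing $a=ga'$, $b=gb'$ with $\gcd(a',b')=1$; the identity $Ab=Ba$ then forces $A=ka'$ and $B=kb'$ for some integer $k$, so $\gcd(A+ma,B+mb)=|k+mg|$, which equals $1$ for at most two values of $m$, giving $\delta=0$.

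The main step is the rationality claim for $\delta$; the only genuine subtleties are isolating the degenerate possibilities ($D=0$, or a prime dividing all of $a$, $b$, $A$, $B$) and observing that in the generic case only the finitely many prime divisors of $D$ are relevant. Once this is established, Theorem~\ref{main2} follows by averaging finitely many rational numbers. None of the earlier results of the paper is needed for this argument.
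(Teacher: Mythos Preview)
Your proof is correct and follows essentially the same route as the paper's: reduce via eventual periodicity to the density $\delta_j$ of visible points along each of the $d$ arithmetic progressions $(A_j+ma,\,B_j+mb)$, introduce the determinant $D=Ab-Ba$ (the paper's $m_i=t(x_0+r_i)-r(y_0+t_i)$) so that only primes dividing $D$ can obstruct visibility, carry out the same prime-by-prime case analysis with the Chinese remainder theorem to obtain the finite product $\prod_{p\mid D,\ p\nmid\gcd(a,b)}(1-1/p)$, and treat the degenerate situations ($D=0$ or a prime dividing all of $a,b,A,B$) separately with $\delta=0$. Your packaging of the case split is marginally cleaner than the paper's $\epsilon_i(p)$ formulation, but the arguments are otherwise identical.
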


Notice that, in particular, $ \lim_{n\to \infty}\overline S_n(x)\ne 6/\pi^2$ for $x$ rational. For example, $\lim_{n\to \infty}\overline S_n(x)={1}/{2}$ for $x=0.\overline{10}$, while $\lim_{n\to \infty}\overline S_n(x)=7/{12}$ for $x=0.10000\overline{10}$. See Section~\ref{sec:rational}.

\section{Some preliminary results}
 The estimates for the mean and the variance of $\overline{S}_n$ contained in the proof of Theorem \ref{main} rely on an estimate for the binomial theorem sum restricted to indices in a certain residue class.
	\begin{lem}\label{lema1}Fix integers $n\ge 1$ and $d\le n$, and let $r\in\{0,1,\dots, d-1\}$. Then, for any $\alpha\in(0,1)$,
		$$
		\sum_{l\equiv r{ \ \text{\rm mod}\ } d}\binom nl \alpha^l(1-\alpha)^{n-l}=\frac 1d+O\Big (\frac 1{\sqrt{\alpha(1-\alpha) n}}\Big ).$$
	\end{lem}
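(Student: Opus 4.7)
The plan is to apply a roots-of-unity filter, compute the modulus of the resulting complex numbers explicitly, and then bound the tail via a Gaussian-type estimate.

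First, I would set $\omega = e^{2\pi i/d}$ and use the standard identity $\frac{1}{d}\sum_{j=0}^{d-1}\omega^{j(l-r)} = \mathbf{1}_{l \equiv r \pmod d}$ to write
$$
\sum_{l\equiv r\,(d)}\binom{n}{l}\alpha^l(1-\alpha)^{n-l}
= \frac{1}{d}\sum_{j=0}^{d-1}\omega^{-jr}\bigl(1-\alpha+\alpha\omega^j\bigr)^{n}.
$$
The $j=0$ term contributes exactly $1/d$. The whole task is therefore to show that the remaining $d-1$ terms contribute $O\bigl(1/\sqrt{\alpha(1-\alpha)n}\bigr)$.

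Next, I would obtain the crucial modulus estimate. A short calculation (expanding real and imaginary parts and applying $\cos\theta = 1 - 2\sin^{2}(\theta/2)$) gives
$$
\bigl|1-\alpha+\alpha e^{i\theta}\bigr|^{2} = 1 - 4\alpha(1-\alpha)\sin^{2}(\theta/2),
$$
so with $\theta = 2\pi j/d$ and the bound $1-x \le e^{-x}$,
$$
\bigl|1-\alpha+\alpha\omega^{j}\bigr|^{n} \le \exp\!\bigl(-2\alpha(1-\alpha)n\sin^{2}(\pi j/d)\bigr).
$$

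Then I would control the sum over $j=1,\dots,d-1$. Pair $j$ with $d-j$ (equal moduli), so that it suffices to bound twice the sum over $1 \le j \le \lfloor d/2 \rfloor$. On that range $\sin(\pi j/d) \ge 2j/d$, hence $\sin^{2}(\pi j/d) \ge 4j^{2}/d^{2}$, and
$$
\frac{1}{d}\sum_{j=1}^{d-1}\bigl|1-\alpha+\alpha\omega^{j}\bigr|^{n}
\le \frac{2}{d}\sum_{j=1}^{\lfloor d/2\rfloor} e^{-8\alpha(1-\alpha)nj^{2}/d^{2}}.
$$
Since $t\mapsto e^{-ct^{2}}$ is decreasing on $[0,\infty)$, $\sum_{j\ge 1} e^{-cj^{2}} \le \int_{0}^{\infty} e^{-ct^{2}}\,dt = \tfrac{1}{2}\sqrt{\pi/c}$ with $c = 8\alpha(1-\alpha)n/d^{2}$; this yields
$$
\frac{1}{d}\sum_{j=1}^{d-1}\bigl|1-\alpha+\alpha\omega^{j}\bigr|^{n}
\le \sqrt{\frac{\pi}{8\alpha(1-\alpha)n}},
$$
which is exactly the desired $O\bigl(1/\sqrt{\alpha(1-\alpha)n}\bigr)$ bound, uniformly in $d$ and $r$.

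There is no serious obstacle; the only slightly delicate point is making sure the estimate is uniform in $d$. The combination $\sin(\pi j/d) \ge 2j/d$ plus the Gaussian integral comparison does this cleanly, because the integral bound $\tfrac{d}{4}\sqrt{\pi/(2\alpha(1-\alpha)n)}$ is multiplied by $2/d$, and the $d$'s cancel. The factor $|1-2\alpha|^{n}$ coming from $j=d/2$ when $d$ is even is absorbed in the same estimate ($\sin^{2}(\pi/2)=1=4(d/2)^{2}/d^{2}$), so no separate argument is needed.
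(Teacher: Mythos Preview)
Your argument is correct and takes a genuinely different route from the paper. The paper exploits the \emph{unimodality} of the binomial weights $\binom{n}{l}\alpha^{l}(1-\alpha)^{n-l}$: it locates the mode $l_0=\lfloor \alpha(n+1)\rfloor$, bounds the maximum term by $O\bigl(1/\sqrt{\alpha(1-\alpha)n}\bigr)$ via Stirling, and then compares the sum over $l\equiv r\pmod d$ to the full sum block-by-block (each block of $d$ consecutive terms contains exactly one term of the progression, which is dominated by the block average on the appropriate side of the mode, up to one exceptional block near $l_0$). Your approach is Fourier-analytic: the roots-of-unity filter reduces the problem to estimating $|1-\alpha+\alpha\omega^{j}|^{n}$, which you compute exactly and then dispatch with the concavity bound $\sin(\pi j/d)\ge 2j/d$ and a Gaussian integral. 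Your method is slicker and yields an explicit constant; the paper's method is more elementary (no complex numbers) and would extend to any unimodal probability mass function with a comparable bound on the maximum, not just the binomial.
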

	\begin{proof} 
The sequence $({n\choose l} \alpha^l(1-\alpha)^{n-l})_l$ is 
unimodal, and in fact,
$$
\binom n{l-1} \alpha^{l-1}(1-\alpha)^{n-l+1}\le \binom n{l}\alpha^{l}(1-\alpha)^{n-l}\iff l\le \alpha (n+1).
$$

 Let $l_0=\left \lfloor \alpha (n+1)\right \rfloor.$
Then we have
$$
\binom n0(1-\alpha)^n<\binom n1\alpha(1-\alpha)^{n-1}<\dots \le \binom n{l_0} \alpha^{l_0}(1-\alpha)^{n-l_0}
$$
and
$$
\binom nn \alpha^n<\binom n{n-1}\alpha^{n-1}(1-\alpha)<\cdots <\binom n{l_0+1}\alpha^{l_0+1}(1-\alpha)^{n-l_0-1}.
$$
The maximum value of these numbers is $\binom n{l_0} \alpha^{l_0}(1-\alpha)^{n-l_0}$, and Stirling's formula implies that, for any $l$,
\begin{equation}
\label{stirling}
\binom nl\alpha^l(1-\alpha)^{n-l}
\le \binom{n}{l_0}\alpha^{l_0}(1-\alpha)^{n-l_0}
\ll \frac{1}{\sqrt{\alpha (1-\alpha) n}}.
\end{equation}

Denote by $j_0$  the largest integer such that $r+j_0d-1\le l_0$ and $j_1$ the largest integer such that $r+j_1d\le n$. Then,
$$
\binom n{r+md}\alpha^{r+md}(1-\alpha)^{n-r-md}\le \frac 1d\sum_{l=r+md}^{r+(m+1)d-1}\binom nl \alpha^l(1-\alpha)^{n-l}
$$
for $m=0,\dots,j_0-1$, and
$$
\binom n{r+md}\alpha^{r+md}(1-\alpha)^{n-r-md}\le \frac 1d\sum_{l=r+(m-1)d+1}^{r+md}\binom nl \alpha^l(1-\alpha)^{n-l}
$$
for $m=j_0+1,\dots,j_1$. Summing up these inequalities and adding the missing term corresponding to $m=j_0$ we have
\begin{align*}
\sum_{l\equiv r\ \text{(mod $d$)}}&\, \binom nl \alpha^l(1-\alpha)^{n-l}
\\
&\le
\frac 1d\sum_l\binom nl \alpha^l(1-\alpha)^{n-l}
+\binom n{r+j_0d}\alpha^{r+j_0d}(1-\alpha)^{n-r-j_0d}
\\
(\text{by \eqref{stirling}} )&\le  \frac 1d+O\Big(\frac 1{\sqrt{\alpha(1-\alpha)n}}\Big).	
\end{align*}
		
		The lower bound is obtained similarly.
	\end{proof}

Since $\alpha\in (0,1)$ will be fixed, we shall omit the dependence on $\alpha$ in the error terms throughout the paper.
\begin{lem}
\label{imp}
Let $\alpha\in(0,1/2]$. For any  integers $n\ge 1$, $a,b\ge 0$, we have
$$
\sum_{0\le l\le n}\binom nl\alpha^l(1-\alpha)^{n-l}\unogrande_{\gcd (l+a,n+b)=1}=\sum_{d\mid n+b}\frac{\mu(d)}d+O\Big(\frac{\tau(n+b)}{\sqrt{ n}}\Big),
$$	
where $\unogrande_A$ denotes the indicator function of the event $A$, and $\tau(m)$ stands for the divisor function.
\end{lem}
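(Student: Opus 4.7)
The plan is to apply Möbius inversion to the indicator $\mathbf{1}_{\gcd(l+a,n+b)=1}$ and then swap the order of summation, reducing the statement to repeated applications of Lemma \ref{lema1}.

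First, I would use the identity
$$
\unogrande_{\gcd(l+a,n+b)=1}=\sum_{\substack{d\mid n+b\\ d\mid l+a}}\mu(d).
$$
Substituting and interchanging sums yields
$$
\sum_{0\le l\le n}\binom{n}{l}\alpha^l(1-\alpha)^{n-l}\unogrande_{\gcd(l+a,n+b)=1}=\sum_{d\mid n+b}\mu(d)\sum_{\substack{0\le l\le n\\ l\equiv -a\,(\text{mod }d)}}\binom{n}{l}\alpha^l(1-\alpha)^{n-l}.
$$

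Next, for each fixed $d\mid n+b$ with $d\le n$, Lemma \ref{lema1} applied with $r\equiv -a\pmod d$, $0\le r<d$, gives that the inner sum equals $1/d+O(1/\sqrt{n})$ (absorbing the fixed factor $\alpha(1-\alpha)$ into the constant, since $\alpha$ is fixed). Multiplying by $\mu(d)$ and summing over such $d$ produces the main term $\sum_{d\mid n+b}\mu(d)/d$ plus an error of size $O(\tau(n+b)/\sqrt{n})$, because $|\mu(d)|\le 1$ and the number of divisors is $\tau(n+b)$.

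The only subtle point is dealing with divisors $d$ of $n+b$ with $d>n$, where Lemma \ref{lema1} does not directly apply. For such $d$, however, the inner sum contains at most one term (since there is at most one $l\in\{0,\dots,n\}$ in any residue class modulo $d$), and by the Stirling estimate \eqref{stirling} this single term is $O(1/\sqrt{n})$; meanwhile $1/d=O(1/n)=O(1/\sqrt{n})$. Hence the inner sum equals $1/d+O(1/\sqrt{n})$ in this range as well, and these contributions are absorbed into the same $O(\tau(n+b)/\sqrt{n})$ error bound. I expect no serious obstacle: the main work is just bookkeeping and ensuring that the error term can be pulled through the (at most $\tau(n+b)$) divisor sum without a worse blow-up.
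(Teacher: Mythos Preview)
Your proof is correct and follows essentially the same approach as the paper: M\"obius inversion on the indicator, interchange of sums, and an application of Lemma~\ref{lema1} to each inner sum. In fact you are slightly more careful than the paper, which tacitly applies Lemma~\ref{lema1} for all $d\mid n+b$ without isolating the range $d>n$; your explicit treatment of that case (at most one term, bounded via~\eqref{stirling}) fills in this small gap.
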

\begin{proof}
We have that $$\unogrande_{\gcd (l+a,n+b)=1}=\sum_{d\mid \gcd (l+a,n+b)}\mu(d),$$ which follows from the identity $\sum_{d\mid m}\mu(d)=1$ if $m=1$ and $0$ if $m>1$.
Thus,
\begin{align*}
&\sum_{0\le l\le n}\binom nl\alpha^l(1-\alpha)^{n-l}\unogrande_{\gcd (l+a,n+b)=1}
=\sum_{d\mid n+b}\mu(d)
\sum_{\substack{0\le l\le n\\l\equiv -a \ \text{(mod $d$)}}}\binom nl\alpha^l(1-\alpha)^{n-l}
\\
&\qquad =\sum_{d\mid n+b}\mu(d)\Big(\frac 1d+O\Big(\frac 1{\sqrt{ n}}\Big)  \Big)
=\sum_{d\mid n+b}\frac{\mu(d)}d+ O\Big(\frac{\tau(n+b)}{\sqrt{ n}}\Big),
\end{align*}
where Lemma \ref{lema1} was used in the second identity.\end{proof}

The following estimates involving the divisor function will be useful.
\begin{lem}\label{imp2} We have:
\begin{itemize}
		\item[i)]$\sum_{i\le n}\frac{\tau(i)}{\sqrt i}\ll \sqrt n \log n.$
		\item[ii)]$\sum_{1\le i<j\le n}\frac{\tau(j)}{\sqrt{j-i}}\ll n^{3/2}\log n.$
\end{itemize}
\end{lem}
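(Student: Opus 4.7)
Both bounds are routine applications of standard estimates for the divisor function, and I would tackle them in order.

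For part (i), the plan is to expand $\tau$ as a sum over divisors and swap the order of summation. Writing $\tau(i)=\sum_{d\mid i}1$, substituting $i=dk$, and then using the elementary estimate $\sum_{k\le m}k^{-1/2}\ll \sqrt m$ (proved by comparison with $\int_0^m t^{-1/2}\,dt$) yields
$$
\sum_{i\le n}\frac{\tau(i)}{\sqrt i}=\sum_{d\le n}\frac{1}{\sqrt d}\sum_{k\le n/d}\frac{1}{\sqrt k}\ll \sum_{d\le n}\frac{1}{\sqrt d}\sqrt{\frac{n}{d}}=\sqrt n\sum_{d\le n}\frac{1}{d}\ll \sqrt n\,\log n.
$$

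For part (ii), I would fix $j$ on the outside and let $m=j-i$, so that $m$ ranges over $1,\dots,j-1$:
$$
\sum_{1\le i<j\le n}\frac{\tau(j)}{\sqrt{j-i}}=\sum_{j\le n}\tau(j)\sum_{m=1}^{j-1}\frac{1}{\sqrt m}\ll \sum_{j\le n}\tau(j)\sqrt j\le \sqrt n\sum_{j\le n}\tau(j).
$$
Then invoking Dirichlet's classical estimate $\sum_{j\le n}\tau(j)=n\log n+O(n)$ gives the claimed bound $\ll n^{3/2}\log n$.

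Neither step presents any real obstacle: everything reduces to the two standard facts $\sum_{k\le m}k^{-1/2}\ll \sqrt m$ and $\sum_{j\le n}\tau(j)\ll n\log n$. The only thing to be slightly careful about is the order of summation in (i) and the substitution $m=j-i$ in (ii), which is what makes the inner sums collapse to manageable form.
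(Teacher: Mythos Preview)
Your argument for part (ii) is exactly the paper's: fix $j$, sum the inner $1/\sqrt{j-i}$ to get $\ll\sqrt j$, then bound $\sum_{j\le n}\tau(j)\sqrt j\le \sqrt n\sum_{j\le n}\tau(j)$ and invoke $D(n)\ll n\log n$.

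For part (i) your proof is correct but takes a different route. The paper uses Abel summation: with $D(n)=\sum_{m\le n}\tau(m)\sim n\log n$, one writes $\sum_{i\le n}\tau(i)/\sqrt i$ as a sum of $D(i)$ against the differences $1/\sqrt i-1/\sqrt{i+1}\asymp i^{-3/2}$, plus the boundary term $D(n)/\sqrt n$, and both pieces are $\ll\sqrt n\log n$. You instead expand $\tau(i)=\sum_{d\mid i}1$ and swap, which reduces directly to the harmonic sum without ever needing the average order of $\tau$ as an input. Both arguments are one-liners; yours is arguably the more self-contained of the two, since the paper's approach quotes $D(n)\sim n\log n$ as a black box while your divisor swap essentially reproves that estimate along the way.
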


\begin{proof} i) The partial sums of the divisor function satisfies the well-known estimate $D(n)=\sum_{m\le n}\tau(m)\sim n\log n$ (see \cite{HW}, Theorem 318). We use this, and summation by parts, to write		
\begin{align*}
\sum_{i\le n}\frac{\tau(i)}{\sqrt i}&= \sum_{i\le n}\frac{D(i)-D(i-1)}{\sqrt i}=\sum_{i\le n-1}D(i)\Big(\frac 1{\sqrt i}-\frac 1{\sqrt{i+1}}\Big)+\frac{D(n)}{\sqrt n}
\\
&=\sum_{i\le n-1}D(i)\, \Big(\frac 1{(\sqrt i+\sqrt{i+1})\sqrt i\sqrt{i+1}}\Big)+\frac{D(n)}{\sqrt n}
\\
&\le  \frac 12\sum_{i\le n-1}\frac{D(i)}{i^{3/2}}+\frac{D(n)}{\sqrt n}\ll\sum_{i\le n-1}\frac{\log i}{\sqrt i}+\sqrt n\log n\ll \sqrt n\log n.
\end{align*}
		
For ii),
$$
\sum_{1\le i<j\le n}\frac{\tau(j)}{\sqrt{j-i}}=\sum_{j\le n}\tau(j)\sum_{i<j}\frac 1{\sqrt{j-i}} \ll \sum_{j\le n}\tau(j)\sqrt j\le \sqrt n\sum_{j\le n}\tau(j)\ll n^{3/2}\log n.
$$
\end{proof}

\begin{lem}\label{imp3}
$$
\sum_{d\le n}\frac{\mu(d)}d\, \Big\lfloor \frac{n}{d}\Big\rfloor =\frac{6n}{\pi^2}+O(\log n).
$$
\end{lem}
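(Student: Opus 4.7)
The natural approach is to split $\lfloor n/d\rfloor$ into its real and fractional parts and apply tail estimates for the Dirichlet series of $\mu$.

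First I would write
$$
\sum_{d\le n}\frac{\mu(d)}d\Big\lfloor\frac nd\Big\rfloor
= n\sum_{d\le n}\frac{\mu(d)}{d^2} - \sum_{d\le n}\frac{\mu(d)}d\Big\{\frac nd\Big\},
$$
using $\lfloor n/d\rfloor = n/d - \{n/d\}$.

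For the first sum, the plan is to extend the summation to infinity and use the classical identity $\sum_{d=1}^\infty \mu(d)/d^2 = 1/\zeta(2) = 6/\pi^2$. The tail is controlled by
$$
\Bigl|\sum_{d>n}\frac{\mu(d)}{d^2}\Bigr|\le \sum_{d>n}\frac1{d^2}\ll \frac1n,
$$
so $n\sum_{d\le n}\mu(d)/d^2 = 6n/\pi^2 + O(1)$.

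For the second (fractional-part) sum, I would use the trivial bound $|\{n/d\}|<1$ together with $|\mu(d)|\le 1$ to get
$$
\Bigl|\sum_{d\le n}\frac{\mu(d)}d\Big\{\frac nd\Big\}\Bigr|\le \sum_{d\le n}\frac1d\ll \log n.
$$
Adding the two contributions yields the stated estimate $6n/\pi^2 + O(\log n)$. There is no real obstacle here: everything reduces to the well-known value of $\zeta(2)$ and the harmonic sum bound, and the dominant $O(\log n)$ error comes entirely from the fractional-part sum.
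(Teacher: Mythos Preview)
Your proposal is correct and follows exactly the same approach as the paper: split $\lfloor n/d\rfloor$ into $n/d-\{n/d\}$, use $\sum_{d\ge 1}\mu(d)/d^2=6/\pi^2$ with the tail bound $\sum_{d>n}1/d^2\ll 1/n$ for the first sum, and bound the fractional-part sum trivially by $\sum_{d\le n}1/d\ll\log n$.
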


\begin{proof}It is a consequence of the well-known formula $\sum_{d=1}^{\infty}{\mu(d)}/{d^2}=6/{\pi^2}$ (see \cite{HW}, Theorem 287):
\begin{align*}
\sum_{d\le n} \frac{\mu(d)}{d} \, \Big\lfloor \frac{n}{d}\Big\rfloor
&=\sum_{d\le n}\frac{\mu(d)}{d^2}-\sum_{d\le n}\frac{\mu(d)}{d}\, \Big\{\frac nd\Big\}
\\&= n\Big(\frac{6}{\pi^2}+O\Big(\sum_{d>n}\frac 1{d^2}\Big)\Big)	+O\Big(\sum_{d\le n}\frac 1d\Big)=\frac{6n}{\pi^2}+O(1)+O(\log n).	
\end{align*}
\end{proof}

\section{Proof of Theorem \ref{main}}\label{sec:2}

Theorem \ref{main} is a consequence of Propositions \ref{e1} and~\ref{var1}, and Lemma~\ref{Borel} below.

Using the notation for $\alpha$-random walks from the introduction, we can estimate:
\begin{prop}\label{e1}For any $\alpha\in (0,1)$ we have
$$
\E(X_1+\cdots +X_n)= \frac{6n}{\pi^2}+O(\sqrt{n}\,\log n\, ).
$$
\end{prop}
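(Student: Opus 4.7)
The plan is to reduce the expectation to a sum that can be attacked with the three preliminary lemmas already established.

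First I would compute $\E(X_i)$ exactly. After $i$ steps the walker is at $P_i=(l, i-l)$ with probability $\binom{i}{l}\alpha^l(1-\alpha)^{i-l}$, where $l$ is the number of horizontal steps. Since $\gcd(l, i-l)=\gcd(l, i)$, we have
$$
\E(X_i)=\sum_{0\le l\le i}\binom{i}{l}\alpha^l(1-\alpha)^{i-l}\unogrande_{\gcd(l,i)=1}.
$$
Assuming first $\alpha\in(0,1/2]$, Lemma~\ref{imp} (with $n=i$ and $a=b=0$) immediately gives
$$
\E(X_i)=\sum_{d\mid i}\frac{\mu(d)}{d}+O\!\left(\frac{\tau(i)}{\sqrt{i}}\right).
$$
If instead $\alpha\in(1/2,1)$, I would pass to $\beta=1-\alpha$ and reindex by $l'=i-l$; since $\gcd(i-l,i)=\gcd(l,i)$, the same estimate applies.

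Next I would sum over $1\le i\le n$. The error aggregates to $O(\sqrt{n}\log n)$ by Lemma~\ref{imp2}(i). For the main term, swapping the order of summation,
$$
\sum_{i=1}^{n}\sum_{d\mid i}\frac{\mu(d)}{d}=\sum_{d\le n}\frac{\mu(d)}{d}\Big\lfloor\frac{n}{d}\Big\rfloor=\frac{6n}{\pi^2}+O(\log n)
$$
by Lemma~\ref{imp3}. Combining both estimates yields the stated bound.

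The proof is essentially a direct bookkeeping once the three lemmas are in place, so there is no serious obstacle; the only care needed is the symmetry argument that extends Lemma~\ref{imp} (stated for $\alpha\le 1/2$) to the full range $\alpha\in(0,1)$, and the verification that the $\tau(i)/\sqrt{i}$ error sums to the $\sqrt{n}\log n$ shown in the statement rather than something larger.
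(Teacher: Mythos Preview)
Your proof is correct and follows essentially the same route as the paper's: compute $\E(X_i)$ via Lemma~\ref{imp}, sum the error with Lemma~\ref{imp2}(i), and evaluate the main term via Lemma~\ref{imp3} after swapping the order of summation. The only addition is your explicit symmetry argument to cover $\alpha\in(1/2,1)$, which the paper leaves implicit.
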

\begin{proof}
The coordinates of a lattice point $P_i$ sum $i$, and are of the form $(l,i-l)$, for some $l=0,\dots, i$, with probability $\binom il\alpha^l(1-\alpha)^{i-l}$. The lattice point $(l,i-l)$ is visible if and only if $\gcd(l,i)=1$. Thus,
$$
\E(X_i)=\sum_{l=0}^i\binom il \, \alpha^l\,(1-\alpha)^{i-l}\,\unogrande_{\gcd(l,i)=1}\ .
$$

Lemma \ref{imp}, with $n=i$ and $a=b=0$, gives
\begin{align*}
\E(X_i)=
\sum_{d\mid i}\frac{\mu(d)}d+O\Big (\frac{\tau(i)}{\sqrt{i}}\Big ).
\end{align*}

Finally, by Lemma \ref{imp2}\,i) and Lemma \ref{imp3}, we have
\begin{align*}
\E(X_1+\cdots +X_n)
&
=\sum_{i\le n}\Big (\sum_{d\mid i}\frac{\mu(d)}d+O\Big (\frac{\tau(i)}{\sqrt{i}}\Big )\Big )
=\sum_{d}\frac{\mu(d)}d\Big \lfloor \frac nd \Big \rfloor+O\Big (\sum_{i\le n}\frac{\tau(i)}{\sqrt{i}}   \Big )
\\
&=
\frac{6n}{\pi^2}+O( \sqrt {n} \log n  ),
\end{align*}
as required.
\end{proof}
\begin{prop}\label{var1}For any $\alpha\in (0,1)$ we have
$$
\V(X_1+\cdots +X_n)\ll n^{3/2}\log n.
$$
\end{prop}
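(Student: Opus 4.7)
I would start from the standard decomposition
$$\V(X_1+\cdots+X_n)=\sum_{i=1}^n\V(X_i)+2\sum_{1\le i<j\le n}\mathrm{Cov}(X_i,X_j).$$
Since each $X_i$ is Bernoulli, $\V(X_i)\le 1$ and the diagonal contributes at most $n$, well within the target $n^{3/2}\log n$. So the entire argument reduces to bounding the covariances.

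To compute $\mathrm{Cov}(X_i,X_j)$ for $i<j$, I condition on $P_i$ and use the independence of the first $i$ steps from the following $j-i$ steps. If $P_i=(l,i-l)$ and if the steps $i+1,\dots,j$ contain $m$ horizontal moves, then $P_j=(l+m,j-l-m)$, so
$$\E(X_iX_j)=\sum_{l=0}^i\binom{i}{l}\alpha^l(1-\alpha)^{i-l}\unogrande_{\gcd(l,i)=1}\sum_{m=0}^{j-i}\binom{j-i}{m}\alpha^m(1-\alpha)^{j-i-m}\unogrande_{\gcd(l+m,j)=1}.$$
Now I apply Lemma~\ref{imp} to the inner sum with parameters $n=j-i$, $a=l$, $b=i$ (so $n+b=j$). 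The crucial observation is that its main term $\sum_{d\mid j}\mu(d)/d$ is \emph{independent of $l$}. This gives
$$\E(X_iX_j)=\Big(\sum_{d\mid j}\frac{\mu(d)}{d}\Big)\E(X_i)+O\Big(\frac{\tau(j)}{\sqrt{j-i}}\Big),$$
where the error absorbs $\E(X_i)\le 1$. Invoking the single-index estimate $\E(X_j)=\sum_{d\mid j}\mu(d)/d+O(\tau(j)/\sqrt{j})$ from the proof of Proposition~\ref{e1}, I subtract $\E(X_i)\E(X_j)$ and the main terms cancel, leaving
$$\mathrm{Cov}(X_i,X_j)\ll\frac{\tau(j)}{\sqrt{j-i}}+\frac{\tau(j)}{\sqrt{j}}\ll\frac{\tau(j)}{\sqrt{j-i}}.$$
Summing over $1\le i<j\le n$ and applying Lemma~\ref{imp2}\,ii) gives the desired bound $n^{3/2}\log n$.

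\textbf{Main obstacle.} The delicate point is the $l$-independence of the main term produced by Lemma~\ref{imp}. Without it, the covariance would be only $O(1)$ and summing over pairs would give $O(n^2)$, far from what is needed. This independence is not accidental: it expresses the fact that, when $d\mid j$, the inner Bernoulli sum of $\unogrande_{l+m\equiv 0\,(\mathrm{mod}\,d)}$ is asymptotically $1/d$ regardless of the residue of $l$ modulo $d$, which is exactly the content of Lemma~\ref{lema1}. Once this cancellation is in place, the remaining analytic work is merely the divisor-sum estimate already packaged in Lemma~\ref{imp2}\,ii).
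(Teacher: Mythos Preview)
Your proof is correct and rests on the same ingredients as the paper's: the conditioning on $P_i$, the application of Lemma~\ref{imp} to the inner binomial sum (with the crucial observation that its main term $\sum_{d\mid j}\mu(d)/d$ does not depend on $l$), and the final appeal to Lemma~\ref{imp2}\,ii) to sum the error $\tau(j)/\sqrt{j-i}$.

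The organization differs slightly. The paper computes $\sum_{i,j}\E(X_iX_j)$ and $\E(X_1+\cdots+X_n)^2$ separately, expanding \emph{both} the inner and the outer sums via Lemma~\ref{imp} to obtain the product $\big(\sum_{d\mid i}\mu(d)/d\big)\big(\sum_{d\mid j}\mu(d)/d\big)$; summing this product over all $i,j\le n$ yields $\big(\sum_d\frac{\mu(d)}{d}\lfloor n/d\rfloor\big)^2$, which is then matched to $(6n/\pi^2)^2$ using Lemma~\ref{imp3}, and the cancellation with the squared mean happens only at the end. You instead keep $\E(X_i)$ intact, so the main term of $\E(X_iX_j)$ already factors as $\E(X_i)\cdot\phi(j)/j$, and the cancellation with $\E(X_i)\E(X_j)$ occurs \emph{pairwise}. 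This is a genuine (if minor) streamlining: it dispenses with Lemma~\ref{imp3} altogether and makes the source of the variance bound---the $l$-uniformity in Lemma~\ref{lema1}---more transparent.
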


\begin{proof} First, using Proposition \ref{e1},
\begin{align}\label{id}
\V(X_1+\cdots +X_n)&=\E\Big ( (X_1+\cdots+X_n)^2\Big )-\E(X_1+\cdots +X_n)^2
\\
&=\sum_{i,j\le n}\E(X_iX_j)-\Big (\frac{6n}{\pi^2}\Big )^2+O(n^{3/2}\log n).
\end{align}

Assume that $i<j$. The lattice points $P_i$ and $P_j$ will be of the form $P_i=(l,i-l),\ P_j=(l+r,j-l-r)$ for some $0\le l\le i$ and $0\le r\le j-i$ with probability
$$
\binom il\alpha^l(1-\alpha)^{i-l}\binom{j-i}{r}\alpha^{r}(1-\alpha)^{j-i-r}.
$$
Thus,
\begin{align*}
&\E(X_iX_j)
\\
&\quad =\sum_{0\le l\le i}\binom il\alpha^l(1-\alpha)^{i-l}\unogrande_{\gcd(i,l)=1}\sum_{0\le r\le j-i}\binom{j-i}{r}\alpha^{r}(1-\alpha)^{j-i-r} \unogrande_{\gcd(l+r,j)=1}.
\end{align*}

Now, using Lemma \ref{imp} with $n=j-i,\ a=l,\ b=i$ in the inner sum, and then with $n=i,\ a=b=0$ in the first sum, we get
\begin{align*}
\E(X_iX_j)&=\Big( \sum_{d\mid i}\frac{\mu(d)}d+  O\Big(\frac{\tau(i)}{\sqrt{ i}}\Big)    \Big)\, \Big( \sum_{d\mid j}\frac{\mu(d)}d+ O\Big (\frac{\tau(j)}{\sqrt{j-i}}\Big ) \Big).
\end{align*}

Notice that $\sum_{d\mid n}{\mu(d)}/{d}={\phi(n)}/{n}\le 1$, where $\phi(n)$ denotes Euler's totient function. Thus, for $i< j$ we have
$$
\E(X_iX_j)=\sum_{d\mid i}\frac{\mu(d)}d\sum_{d\mid j}\frac{\mu(d)}d+O\Big (\frac{\tau(j)}{\sqrt{j-i}}\Big )+O\Big (\frac{\tau(i)}{\sqrt{ i}}\Big ).
$$
Thus, by Lemma \ref{imp2} we have
\begin{align*}
	\sum_{i<j\le n}\E(X_iX_j)
=\sum_{i<j\le n}\sum_{d\mid i}\frac{\mu(d)}d\sum_{d\mid j}\frac{\mu(d)}d+O(n^{3/2}\log n  ).
	\end{align*}
Adding the diagonal terms we get
\begin{align}\label{al}
\sum_{i,j\le n}\E(X_iX_j)
=\sum_{i,j\le n}\sum_{d\mid i}\frac{\mu(d)}d\sum_{d\mid j}\frac{\mu(d)}d+O(n^{3/2}\log n).
\end{align}
The main term can be estimated using Lemma \ref{imp3}:
\begin{align*}
\sum_{i,j\le n}\sum_{d\mid i}\frac{\mu(d)}d\sum_{d\mid j}\frac{\mu(d)}d= \Big (\sum_{d}\frac{\mu(d)}d\Big \lfloor \frac nd\Big \rfloor \Big )^2
= \Big (\frac {6n}{\pi^2}\Big )^2+O(n\log n).
\end{align*}
Now we plug this into \eqref{al}, and then into \eqref{id}, and we are done.
\end{proof}

Finally, Theorem \ref{main}  follows by combining the estimates of Propositions~\ref{e1} and~\ref{var1} with the following standard result (see, for instance, Section 3.2 in \cite{Du}).

\begin{lem}\label{Borel}Let $(W_i)_{i\ge 1} $ be a sequence of uniformly bounded random variables such that
$$
\lim_{n\to\infty}\E(\overline S_n)=\mu,
$$
where $\overline S_n=\frac{1}{n}(W_1+\dots+W_n)$.
If\/ $\V(\overline S_n)\ll n^{-\delta}$ for some $\delta>0$, then
$$
\lim_{n\to \infty}\overline S_n=\mu
$$
almost surely.
\end{lem}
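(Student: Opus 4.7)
The plan is the standard two-step Borel–Cantelli / interpolation argument that one uses to pass from a variance bound on $\overline S_n$ to an almost sure limit.

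First, I would use Chebyshev's inequality to upgrade the variance bound to a tail bound along a sparse subsequence. Namely, for any $\eps>0$,
$$
\P\bigl(|\overline S_n-\E(\overline S_n)|>\eps\bigr)\le \frac{\V(\overline S_n)}{\eps^2}\ll \frac{1}{\eps^2 n^\delta}.
$$
Choose an integer $m$ with $m\delta>1$ and set $n_k=k^m$. Then $\sum_k n_k^{-\delta}=\sum_k k^{-m\delta}<\infty$, so the Borel--Cantelli lemma gives that almost surely $|\overline S_{n_k}-\E(\overline S_{n_k})|\le \eps$ for all sufficiently large $k$. Applying this for a countable sequence $\eps\to 0$ and using the hypothesis $\E(\overline S_n)\to\mu$, I conclude that $\overline S_{n_k}\to\mu$ almost surely.

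The second step is to interpolate between consecutive terms of the subsequence. Let $M$ be a uniform bound for $|W_i|$. For $n_k\le n\le n_{k+1}$, write
\begin{equation*}
\overline S_n-\overline S_{n_k}
=\Bigl(\frac{n_k}{n}-1\Bigr)\overline S_{n_k}+\frac{1}{n}\sum_{i=n_k+1}^{n}W_i,
\end{equation*}
which together with $|\overline S_{n_k}|\le M$ yields
\begin{equation*}
\bigl|\overline S_n-\overline S_{n_k}\bigr|\le 2M\,\frac{n_{k+1}-n_k}{n_k}.
\end{equation*}
Since $n_{k+1}-n_k=(k+1)^m-k^m\ll k^{m-1}$ and $n_k=k^m$, the right-hand side is $O(1/k)$, and in particular tends to $0$ uniformly in $n\in[n_k,n_{k+1}]$ as $k\to\infty$. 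Combining this deterministic bound with the almost sure convergence $\overline S_{n_k}\to\mu$ established in the first step gives $\overline S_n\to\mu$ almost surely.

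The main (minor) obstacle is choosing the right subsequence: it has to be sparse enough that $\sum n_k^{-\delta}<\infty$, but dense enough that the gap $n_{k+1}-n_k$ is small compared with $n_k$, so that the bounded-increments interpolation closes. Any polynomial $n_k=k^m$ with $m>1/\delta$ achieves both simultaneously, which is what makes the argument work for arbitrary $\delta>0$.
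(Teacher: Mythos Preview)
Your proof is correct and follows essentially the same route as the paper: Chebyshev plus Borel--Cantelli along the polynomial subsequence $n_k=k^m$, followed by a deterministic interpolation using the uniform bound on the $W_i$. The only cosmetic difference is that the paper applies Chebyshev once with a shrinking tolerance $1/\sqrt{m}$ (hence requires $m\delta>2$), whereas you use a fixed $\eps$ and a countable intersection (so $m\delta>1$ suffices); both variants are standard and lead to the same conclusion.
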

\begin{proof}
  Let $k$ be a positive integer such that $\delta k>2$. Then, by  the Chebyshev inequality,
$$
\sum_{m}\P(|\overline S_{m^k}-\E(\overline S_{m^k})|\ge 1/\sqrt m)\le
\sum_m \frac{\V(\overline S_{m^k})}{1/m}\ll \sum_m\frac 1{m^{\delta k-1}}<\infty,
$$
and so, by the Borel--Cantelli lemma,
$$
|\overline S_{m^k}-\E(\overline S_{m^k})|\ll \frac{1}{\sqrt m}
$$
almost surely.

Given $n$, let $m$ be such that $m^k\le n<(m+1)^k$. We have that $\overline S_n=\overline S_{m^k}+O(1/m)$. Then,
\begin{align*}
|\overline S_n- \mu|&\le |\overline S_n-\overline S_{m^k}|+|\overline S_{m^k}-\E(\overline S_{m^k})|+ |\E(\overline S_{m^k})-\mu|
\\
&\ll \frac{1}{m}+ \frac {1}{\sqrt m}+|\E(\overline S_{m^k})-\mu|\to 0
\end{align*} when $n\to \infty$, almost surely.
\end{proof}

\section{Proof of Theorem \ref{Sk}}\label{sec4}
\subsection{Notation and auxiliar results}

Some notation and some preliminary results are needed.
We will write
$$
\mathbf s=(s_0,s_1,\dots,s_{k-1}),
$$
with $s_0=(0,0)$ and, for $i=1,\dots,k-1$,
$$
s_i=s_{i-1}+\left\{\begin{array}{cc}
(1,0)&\text{or}\\(0,1)&
\end{array}\right.
$$
for a sequence of $k$ consecutive points of the random walk starting at $(0,0)$. Denote by $\mathcal{S}_{k-1}$ the set comprising the $2^{k-1}$ possible sequences $\mathbf{s}$. Observe that, in an $\alpha$-random walk, a sequence $\mathbf{s}$ has probability
$$
 \P (\mathbf s)=\alpha^{r(\mathbf s)}(1-\alpha)^{u(\mathbf s)},
$$
where $r(\mathbf s)$ is the number of steps $(1,0)$ in $\mathbf s$, and $u(\mathbf s)$, the number of steps $(0,1)$. Notice that $r(\mathbf s)+u(\mathbf{s})=k-1$.

\smallskip
Fix $\mathbf s=(s_0,s_1,\dots,s_{k-1})\in \mathcal S_{k-1}$ and a prime $p$. Consider, within the $p^2$ classes $(x,y)$ mod $p$, the set
\begin{equation}\label{def of Bp(s)}
\mathcal B_p(\mathbf s)=\{ (x,y) \text{ mod $p$} \,:\, (x,y)\equiv -s_i \text{ for some $i=0,1,\dots,k-1$}\},
\end{equation}
and let
\begin{equation}\label{def of Cp(s)}
\mathcal C_p(\mathbf s)=\{(x,y) \text{ mod $p$} \,:\, (x,y)\not\equiv -s_i \text{ for all $i=0,1,\dots,k-1$}\}
\end{equation}
be its complement. Observe that $
|\mathcal B_p(\mathbf s)|+|\mathcal C_p(\mathbf s)|=p^2$ and notice that, if $p\ge k$, then
\begin{equation}\label{size of Bp(s) for p bigger than k}
|\mathcal B_p(\mathbf s)|=k,
\end{equation}
because in this case all the $-s_i$ belong to different classes mod $p$.

For a positive integer $m$, writing $D_m=\prod_{p<m}p$, we also consider
$$
\mathcal{A}_{m}(\mathbf s)=\{(x,y)\text{ mod } D_m : (x,y)\in \mathcal C_p(\mathbf s) \text{ for all prime~$p< m$.}\}
$$
Observe that, by the Chinese remainder theorem,
\begin{equation}\label{size of ADm}
|\mathcal{A}_{m}(\mathbf s)|=\prod_{p<m}|\mathcal C_p(\mathbf s)|.
\end{equation}

We define now a couple of notions of (partial) visibility.

\begin{defn}
{\upshape We say that a lattice point $Q=(a,b)$ is \emph{$p$-visible} if $p\nmid \gcd(a,b).$
We say that $Q=(a,b)$ is \emph{visible at level $m$} if $P$ is $p$-visible for any prime $p< m$.}
\end{defn}

Notice that $P$ is visible if $P$ is $p$-visible por all prime $p$, or equivalently, if $P$ is visible at any level $m$.

Given a sequence $\mathbf s=(s_0,s_1,\dots,s_{k-1})$ and a lattice point $P$, write
$$
\mathbf s(P)=(P+s_0,P+s_1,\dots,P+s_{k-1}),
$$
for a sequence of $k$ consecutive points in the random walk starting at $P$.

\begin{defn}
{\upshape
We say that $\mathbf s(P)$ is \emph{$p$-visible} if the points $P+s_0,P+s_1,\dots,P+s_{k-1}$ are $p$-visible.
We say that $\mathbf s(P)$ is \emph{visible at level $m$} if $\mathbf s(P)$ is $p$-visible for all prime $p< m$.}
\end{defn}

\begin{prop}
\label{trivial}
Let $D_m=\prod_{p<m}p$. Then $\mathbf s(P)$ is visible at level $m$ if $P\equiv (x,y)$ mod $D_m$ for some $(x,y)\in \mathcal{A}_{m}(\mathbf s)$.
\end{prop}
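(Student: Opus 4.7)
The proof should be a direct unwinding of the definitions, with the only nontrivial ingredient being the observation that $p$-visibility of a single point is equivalent to a condition modulo $p$ alone.

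First I would record the basic equivalence: for a lattice point $Q=(a,b)$ and a prime $p$, $p$-visibility ($p \nmid \gcd(a,b)$) is the same as saying $(a,b) \not\equiv (0,0) \pmod{p}$. Applying this with $Q = P + s_i$, we get that $P+s_i$ is $p$-visible if and only if $P \not\equiv -s_i \pmod{p}$. Taking the conjunction over $i=0,1,\dots,k-1$ and recalling \eqref{def of Cp(s)}, we conclude that $\mathbf s(P)$ is $p$-visible if and only if $P \bmod p \in \mathcal C_p(\mathbf s)$.

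Next I would invoke this for each prime $p < m$. Suppose $P \equiv (x,y) \pmod{D_m}$ for some $(x,y) \in \mathcal A_m(\mathbf s)$. Fix any prime $p < m$. Since $p \mid D_m$, reducing modulo $p$ gives $P \equiv (x,y) \pmod{p}$, and by the definition of $\mathcal A_m(\mathbf s)$ the class of $(x,y)$ modulo $p$ lies in $\mathcal C_p(\mathbf s)$. By the equivalence from the previous paragraph, $\mathbf s(P)$ is $p$-visible. As $p < m$ was arbitrary, $\mathbf s(P)$ is visible at level $m$.

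There is no real obstacle: the statement is essentially the Chinese remainder theorem translation of the simultaneous conditions ``$P \bmod p \in \mathcal C_p(\mathbf s)$ for each prime $p<m$'' into a single condition mod $D_m$, combined with the observation that $p$-visibility is detected on the residue class modulo $p$. In fact the implication is an equivalence, although only the ``if'' direction is asserted in the proposition and needed for what follows.
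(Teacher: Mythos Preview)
Your argument is correct and follows exactly the same route as the paper's proof: first observe that $\mathbf s(P)$ is $p$-visible precisely when the residue of $P$ modulo $p$ lies in $\mathcal C_p(\mathbf s)$, then reduce the hypothesis $P\equiv (x,y)\pmod{D_m}$ with $(x,y)\in\mathcal A_m(\mathbf s)$ modulo each prime $p<m$. Your write-up is simply a more detailed version of the paper's three-line proof, and your remark that the implication is in fact an equivalence is also correct.
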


\begin{proof}
Observe that $\mathbf s(P)$ is $p$-visible if the class of $P$ mod $p$ belongs to $\mathcal C_p(\mathbf s)$. Thus,  $\mathbf s(P)$ is $p$-visible for all prime $p<m$ if
the class of $P$ mod $p$ belongs to $\mathcal C_p(\mathbf s)$ for all $p<m$. In other words, if  the class of $P$ mod $D_m$ belongs to $\mathcal{A}_{m}(\mathbf s)$.
\end{proof}

Using the above notations, we prove that:
\begin{lem}\label{lemma:promedio ck} For $\alpha\in(0,1)$, $k\ge 1$ and a positive integer $m$,
$$
\frac 1{D_m^2}\sum_{\mathbf s\in \mathcal S_{k-1}} \P (\mathbf s)\, |\mathcal{A}_{m}(\mathbf s)|=c_k(m;\alpha),
$$
where
$$
c_k(m;\alpha)=b_k(\alpha)\prod_{k\le p<m}\Big(1-\frac k{p^2}\Big)
$$	
with $b_1(\alpha)=b_2(\alpha)=1$ and
$$
b_k(\alpha)=\sum_{\mathbf s\in \mathcal S_{k-1}} \P (\mathbf s)\prod_{p<k}\Big(1-\frac{|\mathcal B_p(\mathbf s)|}{p^2}\Big)\quad\text{for $k\ge 3$.}
$$
The function $b_k(\alpha)$ is a polynomial in $\alpha$ of degree $2\lfloor (k-1)/2\rfloor$ with rational coefficients.
\end{lem}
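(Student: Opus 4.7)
The identity is essentially a direct computation once one factorizes $|\mathcal{A}_m(\mathbf{s})|$ via the Chinese remainder theorem. The non-trivial content is then the analysis of $b_k(\alpha)$ as a polynomial.

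\textbf{Step 1: Factorization.} By \eqref{size of ADm} and the definition of $\mathcal C_p(\mathbf s)$,
$$
|\mathcal{A}_{m}(\mathbf s)|=\prod_{p<m}|\mathcal C_p(\mathbf s)|=\prod_{p<m}\bigl(p^{2}-|\mathcal B_p(\mathbf s)|\bigr),
$$
so that $|\mathcal{A}_{m}(\mathbf s)|/D_m^{2}=\prod_{p<m}(1-|\mathcal B_p(\mathbf s)|/p^{2})$. By the observation \eqref{size of Bp(s) for p bigger than k}, whenever $p\ge k$ the factor $1-|\mathcal B_p(\mathbf s)|/p^{2}$ reduces to $1-k/p^{2}$, independently of $\mathbf s$. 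Splitting the product at $p=k$ and pulling the $\mathbf s$-independent factors outside of the sum over $\mathbf s\in\mathcal S_{k-1}$ yields
$$
\frac{1}{D_m^{2}}\sum_{\mathbf s\in\mathcal S_{k-1}}\P(\mathbf s)\,|\mathcal{A}_{m}(\mathbf s)|
=\Bigl(\prod_{k\le p<m}\Bigl(1-\tfrac{k}{p^{2}}\Bigr)\Bigr)\sum_{\mathbf s\in\mathcal S_{k-1}}\P(\mathbf s)\prod_{p<k}\Bigl(1-\tfrac{|\mathcal B_p(\mathbf s)|}{p^{2}}\Bigr),
$$
which is exactly $c_k(m;\alpha)$. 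For $k=1,2$ the product over $p<k$ is empty, so $b_k(\alpha)=\sum_{\mathbf s}\P(\mathbf s)=1$, as claimed.

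\textbf{Step 2: Polynomial nature and rational coefficients.} Each probability $\P(\mathbf s)=\alpha^{r(\mathbf s)}(1-\alpha)^{u(\mathbf s)}$ is a polynomial in $\alpha$ of degree $r(\mathbf s)+u(\mathbf s)=k-1$ with integer coefficients, and each factor $1-|\mathcal B_p(\mathbf s)|/p^{2}$ is a rational constant. Hence $b_k(\alpha)$ is a polynomial in $\alpha$ with rational coefficients of degree at most $k-1$.

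\textbf{Step 3: Degree bound via reflection symmetry.} This is the main point that requires a little thought. Consider the involution $\mathbf s\mapsto\mathbf s^{*}$ on $\mathcal S_{k-1}$ obtained by swapping the two coordinate directions, i.e.\ reflecting across the diagonal. This exchanges horizontal and vertical steps, so $r(\mathbf s^{*})=u(\mathbf s)$ and $u(\mathbf s^{*})=r(\mathbf s)$, whence $\P(\mathbf s^{*})$ equals $\P(\mathbf s)$ with $\alpha$ and $1-\alpha$ interchanged. On the other hand, the reflection $(x,y)\mapsto (y,x)$ induces a bijection of $\mathcal B_p(\mathbf s)$ with $\mathcal B_p(\mathbf s^{*})$, so $|\mathcal B_p(\mathbf s^{*})|=|\mathcal B_p(\mathbf s)|$. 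Summing over the involution therefore gives $b_k(\alpha)=b_k(1-\alpha)$. A symmetric polynomial of degree at most $k-1$ about $1/2$ must have its coefficient of $\alpha^{k-1}$ vanish whenever $k-1$ is odd (comparing leading coefficients of $b_k(\alpha)$ and $b_k(1-\alpha)$ yields $a_{k-1}=(-1)^{k-1}a_{k-1}$). Consequently the degree is at most $k-1$ if $k$ is odd and at most $k-2$ if $k$ is even, which is $2\lfloor (k-1)/2\rfloor$ in both cases.

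The only subtle ingredient is the symmetry in Step 3; Steps 1 and 2 are mechanical applications of CRT and the definitions. I would expect the computation of $b_3$ and $b_4$ as given in the introduction (degrees exactly $2$) to serve as a sanity check that the bound in Step 3 is attained generically.
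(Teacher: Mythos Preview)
Your proof is correct and follows essentially the same route as the paper. Step~1 coincides verbatim with the paper's argument, and in Step~3 the paper uses the very same involution (there denoted $\mathbf s\mapsto\overline{\mathbf s}$) to pair terms and write $b_k(\alpha)=\tfrac12\sum_{\mathbf s}\bigl(\alpha^{r(\mathbf s)}(1-\alpha)^{u(\mathbf s)}+\alpha^{u(\mathbf s)}(1-\alpha)^{r(\mathbf s)}\bigr)\prod_{p<k}(1-|\mathcal B_p(\mathbf s)|/p^2)$, from which the degree bound is read off directly; your formulation via the functional equation $b_k(\alpha)=b_k(1-\alpha)$ is just a repackaging of the same symmetry.
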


\begin{proof}
By \eqref{size of ADm} and \eqref{size of Bp(s) for p bigger than k},
$$
|\mathcal{A}_{m}(\mathbf s)|=\prod_{p<m}|\mathcal C_p(\mathbf s)|=\prod_{p<m}\left (p^2-|\mathcal B_p(\mathbf s)|\right )=\prod_{p<k}\left (p^2-|\mathcal B_p(\mathbf s)|)\right )\prod_{k\le p<m}\left (p^2-k\right )
$$
(we understand that the first product in the last term of the above chain of identities is equal to 1 for $k=1,2$). This yields
$$
\frac 1{D_m^2}\sum_{\mathbf s\in \mathcal S_{k-1}} \P (\mathbf s)|\mathcal{A}_{m}(\mathbf s)|=\prod_{k\le p<m}\Big(1-\frac{k}{p^2}\Big)\ \cdot \sum_{\mathbf s\in \mathcal S_{k-1}} \P (\mathbf s)\prod_{p<k}\Big(1-\frac{|\mathcal B_p(\mathbf s)|}{p^2}\Big)
.
$$

Denote by $\overline{\mathbf s}$ the complementary sequence of $\mathbf s$, that is, the result of replacing each step $(1,0)$ by $(0,1)$, and viceversa. Observe that $ \P (\overline{\mathbf s})=\alpha^{u(\mathbf s)}(1-\alpha)^{r(\mathbf s)}$ and  $|\mathcal B_p(\overline{\mathbf s})|=|\mathcal B_p(\mathbf s)|$. Thus,
\begin{align*}
b_k(\alpha)&=\sum_{\mathbf s\in \mathcal S_{k-1}} \P (\mathbf s)\prod_{p<k}\Big (1-\frac{|\mathcal B_p(\mathbf s)|}{p^2}\Big )=\frac 12 \sum_{\mathbf s\in \mathcal S_{k-1}}\big( \P (\mathbf s)+ \P (\overline{\mathbf s})\big)\prod_{p<k}\Big (1-\frac{|\mathcal B_p(\mathbf s)|}{p^2}\Big )
\\
&=\frac 12 \sum_{\mathbf s\in \mathcal S_{k-1}}\big( \alpha^{r(\mathbf s)}(1-\alpha)^{u(\mathbf s)}+\alpha^{u(\mathbf s)}(1-\alpha)^{r(\mathbf s)}\big)\prod_{p<k}\Big (1-\frac{|\mathcal B_p(\mathbf s)|}{p^2}\Big )\,.
\end{align*}
As $r(\mathbf s)+u(\mathbf s)=k-1$, it turns out that $b_k(\alpha)$ is a polynomial in $\alpha$ of degree $2\lfloor (k-1)/2\rfloor$. \end{proof}

As an example, consider the case $k=3$. The four possible sequences are
\begin{picture}(0,0)(140,72)\resizebox{5.5cm}{!}{\includegraphics{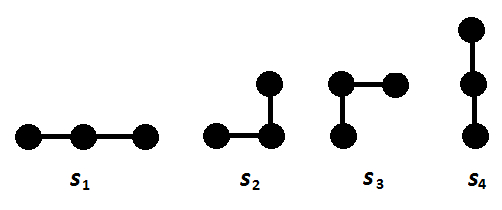}}\end{picture}
\begin{align*}
\mathbf{s}_1&=((0,0), (1,0), (2,0)),\\
\mathbf{s}_2&=((0,0), (1,0), (1,1)),\\
\mathbf{s}_3&=((0,0), (0,1), (1,1)),\\
\mathbf{s}_4&=((0,0), (0,1), (0,2)),\mbox{\hphantom{pepepepepepepepepepepepepepeppepepe}}
\end{align*}
with probabilities $\alpha^2$, $\alpha(1-\alpha)$, $\alpha(1-\alpha)$ and $(1-\alpha)^2$, respectively. Observe that, for instance,
 $$
 -\mathbf{s}_1 \text{ mod $2$} =((0,0), (1,0), (0,0))\quad\text{while}\quad
  -\mathbf{s}_2 \text{ mod $2$} =((0,0), (1,0), (1,1)).
 $$
Considering the four cases, we get $|\mathcal{B}_2(\mathbf{s_1})|=|\mathcal{B}_2(\mathbf{s_4})|=2$ and \mbox{$|\mathcal{B}_2(\mathbf{s_2})|=|\mathcal{B}_2(\mathbf{s_3})|=1$}, and consequently,
$$
b_3(\alpha)=\alpha^2\,\Big(1-\frac{2}{4}\Big)+2\alpha(1-\alpha)\,\Big(1-\frac{1}{4}\Big)+(1-\alpha)^2\, \Big(1-\frac{2}{4}\Big)=\frac{1+\alpha-\alpha^2}{3}.
$$

\subsection{Visible points at level $m$ and the proof of Theorem \ref{Sk}}

Let $X_i(m)$ be the random variable defined by $X_i(m)=1$ if $P_i$ is visible at level $m$ and $0$ otherwise.

For $k\ge 1$, define $Y_i(m)=X_i(m)\cdots X_{i+k-1}(m)$ and
$$
\overline S_{n,k}(m)=\frac{Y_1(m)+\cdots +Y_n(m)}n.
$$
Recall, from the introduction, the variables $X_i$ defined by $X_i=1$ if $P_i$ is visible and $0$ otherwise, and consider the corresponding variables $Y_i$ and $\overline S_{n,k}$.

Next we show how to deduce Theorem \ref{Sk} from Theorem \ref{SkM} below. We postpone the proof of Theorem \ref{SkM} to Section \ref{sec:proof lim ck(m)}.
\begin{thm}\label{SkM}For all $m\in\mathbb{N}$,
	$$
\lim_{n\to \infty}\overline S_{n,k}(m)=c_k(m;\alpha)\quad\text{almost surely,}
	$$
where $c_k(m;\alpha)$ was defined in Lemma {\rm \ref{lemma:promedio ck}}.
\end{thm}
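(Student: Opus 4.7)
The plan is to mirror the proof of Theorem \ref{main} by estimating the mean and the variance of $Y_1(m)+\cdots+Y_n(m)$ and then applying Lemma \ref{Borel}. Write $\mathbf{s}_i=(0,P_{i+1}-P_i,\dots,P_{i+k-1}-P_i)\in\mathcal{S}_{k-1}$ for the shape of the walk at step $i$, so that by Proposition \ref{trivial},
$$
Y_i(m)=\unogrande_{P_i\bmod D_m\,\in\,\mathcal{A}_m(\mathbf{s}_i)}.
$$

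For the mean, $\mathbf{s}_i$ involves only the $k-1$ steps after time $i$ and is therefore independent of $P_i$. Conditioning on $\mathbf{s}_i$,
$$
\E(Y_i(m))=\sum_{\mathbf{s}\in\mathcal{S}_{k-1}}\P(\mathbf{s})\,\P(P_i\bmod D_m\in\mathcal{A}_m(\mathbf{s})).
$$
The random variable $P_i\bmod D_m$ is supported on the level $L_i=\{(x,y)\bmod D_m\,:\,x+y\equiv i\pmod{D_m}\}$, and Lemma \ref{lema1} applied to the inner binomial sum gives $\P(P_i\equiv(x,y)\bmod D_m)=1/D_m+O(1/\sqrt{i})$ for each $(x,y)\in L_i$. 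Summing over $(x,y)\in\mathcal{A}_m(\mathbf{s})\cap L_i$, then over $i=1,\dots,n$ (each residue class mod $D_m$ appearing $n/D_m+O(1)$ times), Lemma \ref{lemma:promedio ck} yields
$$
\E(Y_1(m)+\cdots+Y_n(m))=n\,c_k(m;\alpha)+O(\sqrt{n}),
$$
with implicit constant depending on $m$ and $k$.

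For the variance, I would expand $\V(Y_1(m)+\cdots+Y_n(m))=\sum_{i,j\le n}\bigl[\E(Y_i(m)Y_j(m))-\E(Y_i(m))\E(Y_j(m))\bigr]$ and split into pairs with $|i-j|<k$ (trivial contribution $O(nk)$) and pairs with $j\ge i+k$. For the latter, condition on $\mathcal{F}_{i+k-1}=\sigma(P_0,\dots,P_{i+k-1})$. Then $Y_i(m)$ is $\mathcal{F}_{i+k-1}$-measurable and $P_j=P_{i+k-1}+W$, where $W$ and $\mathbf{s}_j$ are jointly independent of $\mathcal{F}_{i+k-1}$. A further application of Lemma \ref{lema1} to the distribution of $W\bmod D_m$ (a binomial sum of $N:=j-i-k+1$ steps) shows that
$$
\E(Y_j(m)\mid\mathcal{F}_{i+k-1})=\sum_{\mathbf{t}\in\mathcal{S}_{k-1}}\P(\mathbf{t})\,\frac{|\mathcal{A}_m(\mathbf{t})\cap L_j|}{D_m}+O\!\Big(\frac{1}{\sqrt{j-i}}\Big);
$$
crucially, because the translate $L_N+P_{i+k-1}$ equals $L_j$ regardless of the value of $P_{i+k-1}$, the main term is non-random. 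It coincides with the main term of $\E(Y_j(m))$ obtained by the same argument, so $\E(Y_i(m)Y_j(m))-\E(Y_i(m))\E(Y_j(m))\ll 1/\sqrt{j-i}$. Summing over all pairs $1\le i<j\le n$ yields $\V(Y_1(m)+\cdots+Y_n(m))\ll n^{3/2}$, hence $\V(\overline{S}_{n,k}(m))\ll n^{-1/2}$.

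Lemma \ref{Borel} applied with $\delta=1/2$ then delivers the almost-sure convergence $\overline{S}_{n,k}(m)\to c_k(m;\alpha)$. The main obstacle will be the careful level-set bookkeeping in the covariance estimate: because the walk $(P_i\bmod D_m)$ moves cyclically through the levels $L_0,L_1,\dots,L_{D_m-1}$ with period $D_m$, one must track the level of $P_j$ and verify that the shifted set $L_N+P_{i+k-1}$ agrees with $L_j$ independently of $P_{i+k-1}$, so that the conditional expectation of $Y_j(m)$ given $\mathcal{F}_{i+k-1}$ becomes deterministic to leading order and the principal terms cancel cleanly against $\E(Y_i(m))\E(Y_j(m))$.
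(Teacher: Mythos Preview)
Your plan is correct and follows essentially the same approach as the paper: estimate $\E(\overline S_{n,k}(m))$ and $\V(\overline S_{n,k}(m))$ via Proposition~\ref{trivial} and Lemma~\ref{lema1}, then apply Lemma~\ref{Borel}. The paper carries this out as Propositions~\ref{E} and~\ref{Var}, computing $\sum_{i,j}\E(Y_i(m)Y_j(m))$ directly and subtracting the square of the mean; your covariance formulation, with the observation that the leading term of $\E(Y_j(m)\mid\mathcal F_{i+k-1})$ is non-random because the shifted level $L_N+P_{i+k-1}$ always equals $L_j$, is just a cleaner way to package the same cancellation.
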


Now define $Z_i(m)=X_i(m)-X_i$. Observe that $Z_i(m)=1$ if $P_i$ is visible at level~$m$, but $P_i$ is not visible; and $Z_i(m)=0$ otherwise. Consider
$$
\overline Z_{n}(m)=\frac{Z_1(m)+\cdots +Z_n(m)}n.
$$
\begin{cor}	\label{ZnM}
$$
\lim_{n\to \infty}\overline Z_{n}(m)=\prod_{p< m}\Big(1-\frac 1{p^2}\Big)-\frac6{\pi^2}
$$
almost surely.
\end{cor}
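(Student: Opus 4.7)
The plan is to realize the corollary as an immediate consequence of Theorem~\ref{main} combined with the $k=1$ case of Theorem~\ref{SkM}. By the very definition of $Z_i(m)=X_i(m)-X_i$, we can split
$$
\overline Z_n(m)=\frac{1}{n}\sum_{i=1}^n X_i(m)-\frac{1}{n}\sum_{i=1}^n X_i=\overline S_{n,1}(m)-\overline S_n,
$$
so the corollary reduces to showing the almost sure convergence of each of the two summands to their respective limits, and then taking the intersection of the two full-measure events on which these convergences occur.

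For the first summand I would invoke Theorem~\ref{SkM} with $k=1$, which directly gives $\overline S_{n,1}(m)\to c_1(m;\alpha)$ almost surely. It remains to compute $c_1(m;\alpha)$ from Lemma~\ref{lemma:promedio ck}: since $b_1(\alpha)=1$, the formula collapses to
$$
c_1(m;\alpha)=\prod_{1\le p<m}\Big(1-\frac{1}{p^2}\Big)=\prod_{p<m}\Big(1-\frac{1}{p^2}\Big),
$$
independently of $\alpha$, as expected. For the second summand, Theorem~\ref{main} yields $\overline S_n\to 6/\pi^2$ almost surely. Subtracting the two limits gives exactly
$$
\lim_{n\to\infty}\overline Z_n(m)=\prod_{p<m}\Big(1-\frac{1}{p^2}\Big)-\frac{6}{\pi^2}\quad\text{almost surely.}
$$
There is no real obstacle here beyond checking that the $k=1$ specialization of Lemma~\ref{lemma:promedio ck} produces the advertised Euler factor; the work has already been done in the two theorems cited.
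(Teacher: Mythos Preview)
Your proof is correct and follows essentially the same route as the paper: decompose $\overline Z_n(m)=\overline S_{n,1}(m)-\overline S_n$, apply Theorem~\ref{SkM} with $k=1$ to the first term and Theorem~\ref{main} to the second, and subtract the almost-sure limits. The only difference is that you spell out the evaluation $c_1(m;\alpha)=\prod_{p<m}(1-1/p^2)$ from Lemma~\ref{lemma:promedio ck}, which the paper leaves implicit.
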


\begin{proof}Write
$$
\frac{Z_1(m)+\cdots+Z_n(m)}n=\frac{X_1(m)+\cdots+X_n(m)}n-\frac{X_1+\cdots+X_n}n.
$$
By  Theorem \ref{SkM} with $k=1$ and Theorem \ref{main},  we have
$$
\lim_{n\to \infty}\overline Z_{n}(m)=\lim_{n\to \infty}\overline S_{n,1}(m)  - \lim_{n\to \infty}\overline S_{n,1}=\prod_{p< m}\Big(1-\frac 1{p^2}\Big)-\frac 6{\pi^2}
$$
almost surely.
\end{proof}
		
\begin{lem}\label{dd}
$$
\overline S_{n,k}(m)-k\overline Z_{n}(m)-\frac{k^2}{2n}\le \overline S_{n,k}\le \overline S_{n,k}(m).
$$
\end{lem}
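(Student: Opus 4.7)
The plan is to handle the two inequalities separately and by very different methods. The right-hand inequality $\overline S_{n,k}\le \overline S_{n,k}(m)$ is essentially definitional: a visible point is $p$-visible for every prime $p$ (in particular for all $p<m$), so $X_i\le X_i(m)$ pointwise. This yields $Y_i=X_i\cdots X_{i+k-1}\le X_i(m)\cdots X_{i+k-1}(m)=Y_i(m)$, and summing over $i=1,\dots,n$ and dividing by $n$ finishes this direction.

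For the left inequality, my main tool would be the elementary pointwise bound
$$\prod_{j=0}^{k-1} b_j-\prod_{j=0}^{k-1} a_j\le \sum_{j=0}^{k-1}(b_j-a_j)\quad \text{whenever } 0\le a_j\le b_j\le 1,$$
which follows by a one-line telescoping argument (or induction on $k$). Applied with $a_j=X_{i+j}$ and $b_j=X_{i+j}(m)$, so that $b_j-a_j=Z_{i+j}(m)$, it gives the key per-index estimate $Y_i(m)-Y_i\le \sum_{j=0}^{k-1} Z_{i+j}(m)$.

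The final step would be to sum this over $i=1,\dots,n$, swap the order of summation, and absorb the index shift: for each fixed $j$, $\sum_{i=1}^n Z_{i+j}(m)=\sum_{i=j+1}^{n+j} Z_i(m)$ differs from $n\overline Z_n(m)=\sum_{i=1}^n Z_i(m)$ by at most $j$ terms, each bounded by $1$. Hence the double sum is at most $kn\overline Z_n(m)+\sum_{j=0}^{k-1} j\le kn\overline Z_n(m)+k^2/2$, and dividing by $n$ gives precisely the claimed lower bound. No serious obstacle arises; the content of the lemma is just the product-versus-sum inequality together with the harmless boundary correction of at most $k$ extra terms at each end of the shifted sums.
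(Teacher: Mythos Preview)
Your proof is correct and follows essentially the same route as the paper: both arguments establish the per-index bound $0\le Y_i(m)-Y_i\le \sum_{r=0}^{k-1}Z_{i+r}(m)$, sum over $i=1,\dots,n$, and absorb the boundary terms from the index shift into $\sum_{r=1}^{k-1}r\le k^2/2$. The only cosmetic difference is how the key inequality is justified: you use the general telescoping bound $\prod b_j-\prod a_j\le\sum(b_j-a_j)$ for $0\le a_j\le b_j\le 1$, whereas the paper observes that $Y_i(m)-Y_i\in\{0,1\}$ and that the value $1$ forces some $Z_{i+r}(m)=1$.
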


\begin{proof}
It is clear that
$$
Y_i(m)-Y_i=X_i(m)\cdots X_{i+k-1}(m)-X_i\cdots X_{i+k-1}
$$
takes the values $0$ or $1$. In fact, writing
$$
Y_i(m)-Y_i=(X_i+Z_i(m))\cdots (X_{i+k-1}+Z_{i+k-1}(m) )-X_i\cdots X_{i+k-1},
$$
we observe that if	$	Y_i(m)-Y_i=1$ then $Z_{i+r}(m)=1$ for some $r=0,\dots,k-1$. Thus,
$0\le Y_i(m)-Y_i\le Z_i(m)+\cdots +Z_{i+k-1}(m).$

Summing  up these inequalities for $i=1,\dots,n$ we get
$$
0\le \sum_{i=1}^nY_i(m)-\sum_{i=1}^nY_i\le \sum_{i=1}^n\sum_{r=0}^{k-1}Z_{i+r}(m)\le k\sum_{i=1}^nZ_i(m)+\sum_{i=1}^{k-1} i\le
k\sum_{i=1}^nZ_i(m)+\frac{k^2}{2}.
$$
Thus,
\begin{equation*}
0\le \overline S_{n,k}(m)-\overline S_{n,k}\le k\overline Z_{n}(m)-\frac{k^2}{2n}.\qedhere
\end{equation*}
\end{proof}

\begin{proof}[Proof of Theorem {\rm \ref{Sk}}] Taking limits in Lemma \ref{dd}, and using Theorem~\ref{SkM} and Corollary \ref{ZnM}, we get
$$
c_k(m;\alpha)-k\Big(\prod_{p< m}\Big(1-\frac 1{p^2}\Big)-\frac 6{\pi^2}\Big)\le \liminf_{n\to \infty}\overline S_{n,k}\le \limsup_{n\to \infty}\overline S_{n,k}\le c_k(m;\alpha)
$$
almost surely.
Since this is true for any $m$, we can take the limit as $m\to \infty$. Since $\lim_{m\to \infty}\prod_{p<m}(1-1/{p^2})=6/\pi^2$, we have that
$$
\lim_{n\to \infty}\overline S_{n,k}=\lim_{m\to \infty}c_k(m;\alpha)=c_k(\alpha)=b_k(\alpha)\prod_{p}\Big(1-\frac k{p^2}\Big)
$$
almost surely. This  finishes the proof.\end{proof}

\subsection{Proof of Theorem \ref{SkM}}\label{sec:proof lim ck(m)}

Theorem \ref{SkM} will follow from the estimates for $\E(\overline{S}_{n,k}(m))$ and $\V(\overline S_{n,k}(m))$ contained in Propositions \ref{E} and~\ref{Var} below, plus an extra application of Lemma~\ref{Borel}.

\begin{prop}\label{E}
$$
\E(\overline{S}_{n,k}(m))=c_k(m;\alpha)+O\Big(\frac{D_m^2}{\sqrt{ n}}\Big).
$$
	\end{prop}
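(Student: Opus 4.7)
The plan is to decompose $Y_i(m)$ by conditioning on the sequence $\mathbf{s}\in\mathcal{S}_{k-1}$ of relative steps from $P_i$ to $P_{i+k-1}$, then apply Proposition~\ref{trivial} to convert visibility at level $m$ of $\mathbf{s}(P_i)$ into a congruence condition on $P_i$ modulo $D_m$, and finally use Lemma~\ref{lema1} to estimate the resulting binomial tail.

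First I would write
$$
Y_i(m)=\sum_{\mathbf s\in\mathcal S_{k-1}}\uno_{\{\text{steps }i{+}1,\dots,i{+}k{-}1\text{ form }\mathbf s\}}\cdot\uno_{\{\mathbf s(P_i)\text{ visible at level }m\}}.
$$
Since the steps $i{+}1,\dots,i{+}k{-}1$ are independent of the steps $1,\dots,i$ that produce $P_i$, the two indicators are independent, so
$$
\E(Y_i(m))=\sum_{\mathbf s}\P(\mathbf s)\,\P\bigl(P_i\bmod D_m\in\mathcal A_m(\mathbf s)\bigr),
$$
using Proposition~\ref{trivial}. Now $P_i=(L,i-L)$ with $L\sim\text{Bin}(i,\alpha)$, so the event $P_i\equiv(x,y)\pmod{D_m}$ forces $x+y\equiv i\pmod{D_m}$ and reduces to $L\equiv x\pmod{D_m}$. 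Applying Lemma~\ref{lema1} with $n=i$, $d=D_m$,
$$
\P\bigl(P_i\bmod D_m\in\mathcal A_m(\mathbf s)\bigr)=\frac{N_i(\mathbf s)}{D_m}+O\!\left(\frac{D_m^2}{\sqrt{i}}\right),
$$
where $N_i(\mathbf s)=\#\{(x,y)\in\mathcal A_m(\mathbf s):x+y\equiv i\pmod{D_m}\}\le |\mathcal A_m(\mathbf s)|\le D_m^2$.

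Averaging over $i=1,\dots,n$, the error contributes $O(D_m^2/\sqrt n)$ after summing $\sum_{i\le n}i^{-1/2}\ll\sqrt n$. For the main term, each $(x,y)\in\mathcal A_m(\mathbf s)$ hits the congruence $i\equiv x+y\pmod{D_m}$ for $\lfloor n/D_m\rfloor+O(1)$ values of $i\le n$, giving
$$
\sum_{i=1}^n N_i(\mathbf s)=\frac{n}{D_m}\,|\mathcal A_m(\mathbf s)|+O(D_m^2).
$$
Hence
$$
\E(\overline S_{n,k}(m))=\frac{1}{D_m^2}\sum_{\mathbf s}\P(\mathbf s)\,|\mathcal A_m(\mathbf s)|+O\!\left(\frac{D_m^2}{\sqrt n}\right),
$$
and the main term equals $c_k(m;\alpha)$ by Lemma~\ref{lemma:promedio ck}.

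The only real obstacle is bookkeeping: verifying the independence claim cleanly (i.e.\ that $\mathbf s$ is a function of steps strictly after step $i$) and tracking the fact that, because the binomial lives on the diagonal $x+y=i$, only the $(x,y)\in\mathcal A_m(\mathbf s)$ on one diagonal mod $D_m$ contribute to each fixed $i$, which is exactly what makes the $D_m^2$ factor appear in the error once we uniformly bound $N_i(\mathbf s)$ by $|\mathcal A_m(\mathbf s)|$. Everything else is a routine application of Lemma~\ref{lema1} and Lemma~\ref{lemma:promedio ck}.
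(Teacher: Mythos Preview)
Your argument is correct and is essentially the paper's own proof. The only cosmetic difference is that the paper first passes from $P_i=(l,i-l)$ to the pair $(i,l)$ (using $\gcd(l,i-l)=\gcd(i,l)$), so that the membership condition in $\mathcal A_m(\mathbf s)$ splits cleanly into separate congruences $i\equiv x$ and $l\equiv y\pmod{D_m}$; you instead keep $(l,i-l)$ and record the diagonal constraint $x+y\equiv i\pmod{D_m}$ via your counting function $N_i(\mathbf s)$. Both routes apply Lemma~\ref{lema1} to the binomial sum in $l$, then average over $i$, and both land on Lemma~\ref{lemma:promedio ck} for the main term with the same $O(D_m^2/\sqrt n)$ error.
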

\begin{proof}
	A sequence of $k-1$ lattice points $P_i,\dots ,P_{i+k-1}$ in a $\alpha$-random walk is of the form
$$
\mathbf s(P_i)=(P_i+s_0,P_i+s_1,\dots,P_i+s_{k-1}),
$$
with $P_i=(l,i-l)$  for some $0\le l\le i$, and for some $\mathbf s:=(s_0,s_1,\dots,s_{k-1})\in \mathcal S_{k-1}$.

We observe that $Y_i(m)=1$ if $X_{i+r}(m)$ is visible at level $m$ for all $r=0,\dots,k-1$.
 Thus,
\begin{align*}
\E(Y_i(m))&= \P (X_{i+r}(m)\text{ is visible at level } m \text{ for all }r=0,\dots ,k-1)
\\
&=\sum_{\mathbf s\in \mathcal S_{k-1}}\sum_{l=0}^i \P (\mathbf s) \P \big (P_i=(l,i-l)\big )\unogrande_{\mathbf s(l,i-l)\text{ is visible at level }m}.
\end{align*}
Since $\gcd(l,i-l)=\gcd(i,l)$, we have that $\mathbf s(l,i-l)$ is visible at level $m$ if and only if $\mathbf s(i,l)$ is visible at level $m$. Thus, by Proposition \ref{trivial} and Lemma \ref{lema1}, we have
\begin{align*}
\sum_{i=1}^n\E(Y_i(m))
&= \sum_{\mathbf s\in \mathcal S_{k-1}} \P (\mathbf s)\sum_{(x,y)\in \mathcal{A}_{m}(\mathbf s)}\sum_{\substack{1\le i\le n\\ i\equiv x \ \text{(mod $D_m$)} }}\sum_{\substack{0\le l\le i\\l\equiv y\ \text{(mod $D_m$)} }}\binom il \alpha^l(1-\alpha)^{i-l}
\\	
&=  \sum_{\mathbf s\in \mathcal S_{k-1}} \P (\mathbf s)\sum_{(x,y)\in \mathcal{A}_{m}(\mathbf s)}\sum_{\substack{1\le i\le n\\ i\equiv x\ \text{(mod $D_m$)}}}\Big(\frac 1{D_m}+O\Big(\frac 1{\sqrt{ i}}\Big)\Big)
\\ 	
&= \sum_{\mathbf s\in \mathcal S_{k-1}} \P (\mathbf s)\sum_{(x,y)\in \mathcal{A}_{m}(\mathbf s)}\left (\frac n{D_m^2}+O( \sqrt n)\right )
\\
&=
n\Big(\frac 1{D_m^2}\sum_{\mathbf s\in \mathcal S_{k-1}} \P (\mathbf s)|\mathcal{A}_{m}(\mathbf s)|\Big)+ O\left (D_m^2\sqrt n\right ).
\end{align*}
After dividing by $n$, we conclude the proof recalling Lemma \ref{lemma:promedio ck}.
\end{proof}
\begin{prop}\label{Var}$$V(\overline S_{n,k}(m))\ll \frac{D_m^4}{\sqrt n}.$$	
\end{prop}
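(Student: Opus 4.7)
The plan is to mirror the proof of Proposition \ref{var1}. Expanding
$$
\V\Big(\sum_{i=1}^n Y_i(m)\Big)=\sum_{i,j\le n}\big[\E(Y_i(m)Y_j(m))-\E(Y_i(m))\E(Y_j(m))\big],
$$
I split into near-diagonal pairs ($|i-j|<2k$) and far pairs ($|i-j|\ge 2k$). The near-diagonal portion has at most $4kn$ pairs, each of covariance bounded by $1$, so it contributes $O(kn)$---well below the $D_m^4\,n^{3/2}$ needed before dividing by $n^2$.

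For the far pairs $1\le i<j\le n$ with $j-i\ge 2k$, the shapes $\mathbf s,\mathbf s'\in\mathcal S_{k-1}$ of $\mathbf s(P_i)$ and $\mathbf s'(P_j)$ are encoded by walk steps $i{+}1,\ldots,i{+}k{-}1$ and $j{+}1,\ldots,j{+}k{-}1$, disjoint from each other and from the blocks that determine $P_i$ (steps $1,\ldots,i$) and the intermediate increment $P_j-P_{i+k-1}$ (steps $i{+}k,\ldots,j$, of length $j-i-k+1\ge k+1$). By Proposition \ref{trivial}, visibility at level $m$ of $\mathbf s(P_i)$ and $\mathbf s'(P_j)$ reduces to the events $P_i\bmod D_m\in\mathcal A_m(\mathbf s)$ and $P_j\bmod D_m\in\mathcal A_m(\mathbf s')$, and the independence above lets me factor the joint probability, applying Lemma \ref{lema1} once to the Binomial distribution governing the $x$-coordinate of $P_i$ and once to that of $P_j-P_{i+k-1}$. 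After the same bookkeeping as in Proposition \ref{E}, this yields
$$
\E(Y_i(m)Y_j(m))=\frac{1}{D_m^2}\sum_{\mathbf s,\mathbf s'}\P(\mathbf s)\P(\mathbf s')\,N_i(\mathbf s)\,N_j(\mathbf s')+O\Big(\frac{D_m^2}{\sqrt i}\Big)+O\Big(\frac{D_m^2}{\sqrt{j-i}}\Big),
$$
where $N_i(\mathbf s)=\#\{(x,y)\in\mathcal A_m(\mathbf s):x+y\equiv i\pmod{D_m}\}$ counts the target classes compatible with $P_i$ (whose coordinates sum to $i$).

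Running the computation of Proposition \ref{E} separately at indices $i$ and $j$ gives, in the same notation,
$$
\E(Y_i(m))\E(Y_j(m))=\frac{1}{D_m^2}\sum_{\mathbf s,\mathbf s'}\P(\mathbf s)\P(\mathbf s')\,N_i(\mathbf s)\,N_j(\mathbf s')+O\Big(\frac{D_m^2}{\sqrt i}\Big)+O\Big(\frac{D_m^2}{\sqrt j}\Big),
$$
with \emph{the same} main term. Subtracting, the leading parts cancel and $|\mathrm{Cov}(Y_i(m),Y_j(m))|\ll D_m^2/\sqrt{j-i}+D_m^2/\sqrt i$. Summing over far pairs with the elementary estimates $\sum_{i\le n}1/\sqrt i\ll\sqrt n$ and $\sum_{1\le i<j\le n}1/\sqrt{j-i}\ll n^{3/2}$ (no divisor-function factors are needed, since $D_m$ is a fixed modulus here), the total off-diagonal contribution is $O(D_m^2 n^{3/2})$. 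Adding the $O(kn)$ near-diagonal contribution and dividing by $n^2$ gives $\V(\overline S_{n,k}(m))\ll D_m^2/\sqrt n$, which comfortably implies the claimed $D_m^4/\sqrt n$.

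The main obstacle is to ensure that the main terms of $\E(Y_iY_j)$ and $\E(Y_i)\E(Y_j)$ genuinely coincide, so that only error terms survive the subtraction. This is precisely where the disjointness of the step-blocks encoding $\mathbf s$, $\mathbf s'$, $P_i$ and the inner increment $P_j-P_{i+k-1}$ is used: it produces the product $N_i(\mathbf s)N_j(\mathbf s')/D_m^2$ in $\E(Y_iY_j)$, which matches the factorisation obtained from two independent applications of Lemma \ref{lema1} in $\E(Y_i)\E(Y_j)$. Without this disjointness, a non-trivial covariance would remain and spoil the estimate.
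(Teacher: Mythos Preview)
Your argument is correct and follows essentially the same route as the paper: condition on the two shapes, apply Lemma~\ref{lema1} once to the binomial governing $P_i$ and once to the increment $P_j-P_{i+k-1}$, and absorb the near-diagonal pairs trivially. The only organizational difference is that you cancel the main terms pairwise (showing each far covariance is $O(D_m^2/\sqrt{j-i})+O(D_m^2/\sqrt i)$), whereas the paper first sums $\sum_{i,j}\E(Y_iY_j)$ to $n^2c_k(m;\alpha)^2+O(D_m^4n^{3/2})$ and then subtracts $\big(\sum_i\E(Y_i)\big)^2$ using Proposition~\ref{E}; this is why you obtain the sharper $D_m^2/\sqrt n$ in place of $D_m^4/\sqrt n$.
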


\begin{proof}
We start by estimating
$$
\E(Y_i(m)\cdot Y_j(m))=\E(X_i(m)\cdots X_{i+k-1}(m)\cdot X_j(m)\cdots X_{j+k-1}(m)).
$$
Assume that $i<j-k$. A typical pair of sequences of points
$$
P_i,P_{i+1},\dots,P_{i+k-1}\quad\text{and}\quad P_j,P_{j+1},\dots,P_{j+k-1}
$$
are of the form $\mathbf t(P_i)$ and $\mathbf s(P_j)$, where
$\mathbf t=(t_0,t_1,\dots,t_{k-1})$ and $\mathbf s=(s_0,s_1,\dots,s_{k-1})$ belong to $\mathcal S_{k-1}$,
$P_i=(l,i-l),$ for some  $0\le l\le i$ and
$P_j=(l+u+r,j-l-u-r)$ for some $r,\ 0\le r\le j-i-(k-1)$, with $(u,v)=t_1+\cdots+t_{k-1}$. Notice that $u+v=k-1$. Notice also that the condition $i<j-k$ avoids  possible intersection between the sequences.
		
Thus we have
\begin{align*}
\E (Y_i(m)Y_j(m))=&\sum_{\mathbf t,\mathbf s\in \mathcal S_{k-1}}\sum_{l=0}^i\sum_{r=0}^{j-i-(k-1)}  \P (\mathbf t) \P (\mathbf s)
\\
&\cdot
 \P \big (P_i=(l,i-l)\big )
\\
&\cdot \P \big (P_j=(l+u+r,j-l-u-r)\,|\, P_{i+k-1}=(l+u,i-l+v)\big )
\\
&\cdot \unogrande_{\mathbf t(P_i)\text{ and }\mathbf s(P_j) \text{ are visible at level }m}.	
\end{align*}
Using again that $\mathbf t(P_i)=\mathbf t(l,i-l)$ and $\mathbf s(P_j)=\mathbf s(l+u+r,j-l-u-r)$ are visible at level $m$ if and only if $\mathbf t(i,l)$ and $\mathbf s(l+u+r,j)$ are visible at level $m$, we have
\begin{align*}
&\sum_{\substack{i,j\\ 1\le i< j-k\le n-k}}
\E (Y_i(m)Y_j(m))=\sum_{\mathbf t,\mathbf s\in \mathcal S_{k-1}}\sum_{1\le i< j-k\le n-k}\sum_{\substack{l=0}}^i\sum_{\substack{r=0}}^{j-i-(k-1)} \P (\mathbf t) \, \P (\mathbf s)
\\
&\qquad\qquad\qquad\cdot
\P \big (P_i=(l,i-l)\big )
\\
&\qquad\qquad\qquad\cdot \P \big (P_j=(l+u+r,j-l-u-r)\,|\, P_{i+k-1}=(l+u,i-l+v)\big )
\\
& \qquad\qquad \qquad\cdot \unogrande_{\mathbf t(l,i)\text{ and }\mathbf s(l+u+r,j) \text{ are visible at level }m}.
\end{align*}	

Using again Proposition \ref{trivial} we have
\begin{align*}
&\sum_{\substack{i,j\\ 1\le i<j-k\le n-k}}\E (Y_i(m)Y_j(m))
\\
&\quad=
\sum_{\mathbf t,\mathbf s\in \mathcal S_{k-1}}
\sum_{\substack{(x,y)\in \mathcal{A}_{m}(\mathbf t)\\(z,w)\in \mathcal{A}_{m}(\mathbf s)}}
\sum_{\substack{1\le i< j-k\le n-k\\i\equiv y\ \text{(mod $D_m$)}\\ j\equiv w\ \text{(mod $D_m$)}}}\sum_{\substack{l=0\\ l\equiv x\ \text{(mod $D_m$)}}}^i
\sum_{\substack{r=0\\r\equiv z-l-u\ \text{(mod $D_m$)}}}^{j-i-(k-1)} \P (\mathbf t)\, \P (\mathbf s)
\\
&\quad\qquad\qquad\cdot \P \big (P_i=(l,i-l)\big )
\\
&\quad\qquad\qquad\cdot \P \big (P_j=(l+u+r,j-l-u-r)\,|\, P_{i+k-1}=(l+u,i-l+v)\big )
\\
&\quad=
\sum_{\mathbf t,\mathbf s\in \mathcal S_{k-1}}
\sum_{\substack{(x,y)\in \mathcal{A}_{m}(\mathbf t)\\(z,w)\in \mathcal{A}_{m}(\mathbf s)}}
\ \sum_{\substack{1\le i< j-k\le n-k\\i\equiv y\ \text{(mod $D_m$)}\\ j\equiv w\ \text{(mod $D_m$)}}}
\ \sum_{\substack{l=0\\ l\equiv x\ \text{(mod $D_m$)}}}^i \P (\mathbf t) \, \P (\mathbf s)
\\
&\quad\cdot \!\!\sum_{\substack{r=0\\r\equiv z-l-u\ \text{(mod $D_m$)}}}^{j-i-(k-1)}
\binom il\alpha^l(1-\alpha)^{i-l}\binom{j-i-(k-1)}{r}\alpha^{r}(1-\alpha)^{j-i-(k-1)-r}.
\end{align*}
By Lemma \ref{lema1} we have
\begin{align*}
\sum_{\substack{r=0\\r\equiv z-l-u\ \text{(mod $D_m$)}}}^{j-i-(k-1)}&\, \binom{j-i-(k-1)}{r}\alpha^{r}(1-\alpha)^{j-i-(k-1)-r}
\\&\qquad
=\frac 1{D_m}+O\Big(\frac 1{\sqrt{(j-i-(k-1))}}\Big)	
\end{align*}
and
$$
\sum_{\substack{l=0\\ l\equiv x\ \text{(mod $D_m$)}}}^i \binom il\alpha^l(1-\alpha)^{i-l}=\frac 1{D_m}+O\Big(\frac 1{\sqrt{ i}}\Big).
$$
Thus,
\begin{align*}
&\sum_{\substack{i,j\\ 1\le i< j-k\le n-k}}\E (Y_i(m)Y_j(m))
=
\sum_{\mathbf t,\mathbf s\in \mathcal S_{k-1}}\sum_{\substack{(x,y)\in \mathcal{A}_{D_m}(\mathbf t)\\(z,w)\in \mathcal{A}_{D_m}(\mathbf s)}}
\P (\mathbf t) \, \P (\mathbf s)
\\
&\qquad \cdot \sum_{\substack{1\le i< j-k\le n-k\\i\equiv y \ \text{(mod $D_m$)}\\ j\equiv w\ \text{(mod $D_m$)}}}\Big(\frac 1{D_m}+O\Big(\frac 1{\sqrt{j-i-(k-1)}}\Big) \bigg)\Big(\frac 1{D_m}+O\Big(\frac 1{\sqrt{ i}}\Big)
\Big).
\end{align*}
Straightforward calculations show that
$$
\sum_{\substack{1\le i< j-k\le n-k\\i\equiv y\ \text{(mod $D_m$)}\\ j\equiv w\ \text{(mod $D_m$)}}}
	\Big(\frac 1{D_m}+O\Big(\frac 1{\sqrt{j-i-(k-1)}}\Big) \Big)\Big(\frac 1{D_m}+O(\frac 1{\sqrt{ i}})
	\Big)=\frac{n^2}{2D_m^4}+O(n^{3/2}).
$$
		Thus,
$$
\sum_{\substack{i,j\\ 1\le i< j-k\le n-k}}\E (Y_i(m)Y_j(m))
=\bigg(\frac 1{D_m^2}\sum_{\mathbf t\in \mathcal S_{k-1}}\sum_{\substack{(x,y)\in \mathcal{A}_{D_m}(\mathbf s)}} \P (\mathbf s)\bigg)^2 \Big(\frac{n^2}{2}+O(D_m^4n^{3/2})\Big).
$$
For the cases $|i-j|\le k$, where the two sequences may intersect, we use the trivial estimate
$$
\sum_{\substack{1\le i,j\le n\\ |j-i|\le k}}\E (Y_i(m)Y_j(m))\ll n.
$$
We conclude that
\begin{align}\nonumber
\sum_{1\le i,j\le n}\E (Y^i(m)Y^j(m))
&=
\Big(\frac 1{D_m^2}\sum_{\mathbf t\in \mathcal S_{k-1}} \P (\mathbf s) |\mathcal{A}_{m}(\mathbf s)|\Big)^2\, \big(n^2+O(D_m^4n^{3/2})\big)+O(n)
\\ \label{sum}
&= n^2c^2_k(m;\alpha) +O(D_m^4n^{3/2} ),
\end{align}
using Lemma \ref{lemma:promedio ck}. We finish the proof observing that
\begin{align*}
\V(\overline S_{n,k}(m))&=\E(\overline S^2_{n,k}(m))-\E^2(\overline S_{n,k}(m))
\\
&=\frac 1{n^2}	\sum_{1\le i,j\le n} \E(Y^i(m)Y^j(m))-  \E^2(\overline S_{n,k}(m)) =O\Big(\frac{D_m^4}{\sqrt n} \Big),
\end{align*}
by \eqref{sum} and Proposition \ref{E}.
\end{proof}

\section{Rational walks}\label{sec:rational}
Consider, for any real number $x\in [0,1)$ expressed in binary form, the associated infinite walk, starting at $P_0=(0,0)$, in which each $1$ in the binary representation means step $(1,0)$, and each $0$, step $(0,1)$. Define
$$
\overline{S}_n(x)=\frac{X_1+\cdots +X_n}n,
$$
where $X_i=1$ if the the  lattice point $P_i$ in this walk is visible, and $X_i=0$ otherwise.

We are interested in the limit of $\overline{S}_n(x)$ when $x$ is a rational number. It is instructive to consider an example.

The walk associated to the rational number $x=0\mbox{.}11\overline{101}$ comprises the lattice points $$\boldsymbol{(0,0),(1,0),(2,0)},(3,0)(3,1),(4,1),\dots ,(2k+1,k-1),(2k+1,k)(2k+2,k),\dots $$
The lattice points in bold correspond to the aperiodic part. The remaining points can be classified in three classes. The following facts are easy to check:

The lattice point $(2k+1,k-1)$ is visible if and only $k\not \equiv 1\pmod 3$. Thus the relative density for this class of lattice points is $\delta_1=2/3$.

The lattice point $(2k+1,k)$ is visible for all $k$. Thus, $\delta_2=1$.

The lattice point $(2k+2,k)$ is visible if only $k\not \equiv 0 \pmod 2$. Thus, $\delta_3=1/2$.

So the global density, i.e., the average of the relative densities, is
$$\lim_{n\to \infty}S_n(x)=\frac{\delta_1+\delta_2+\delta_3}3=\frac{13}{18}.$$

\begin{proof}[Proof of Theorem {\rm \ref{main2}}]
	Suppose that
	$$
	x=0\mbox{.}\,\alpha_1\cdots \alpha_m \,\overline{\alpha_{m+1}\cdots\alpha_{m+l}},
	$$
	with $m\ge 0$ and a period of length $l\ge 1$. Write $v_i$ for the step $(1,0)$ or $(0,1)$ associated with each $\alpha_i$. Denote by $(x_0,y_0)$ the lattice point reached after the steps corresponding to the the nonperiodic part, and write, for $i=1,\dots,l$, $(r_i,t_i)=v_{m+1}+\cdots +v_{m+i}$.
	The point $(r_l,t_l)$ will be simply denoted by $(r,t)$.
	
	For $n\ge 1$, we display the first $ln$ lattice points (after the nonperiodic part) in the following table:
	$$
	\begin{array}{l|ll|l}
	(x_0,y_0)+(r_1,t_1) &\cdots &\cdots &(x_0,y_0)+(r_l,t_l)
	\\
	(x_0,y_0)+(r_1,t_1)+(r,t)& \cdots & \cdots& (x_0,y_0)+(r_l,t_l)+(r,t)
	\\
	\qquad\vdots &                &
	\\
	(x_0,y_0)+(r_1,t_1)+(n-1)(r,t)& \cdots & \cdots& (x_0,y_0)+(r_l,t_l)+(n-1)(r,t).
	\end{array}
	$$
	
Fix now $i$, $1\le i\le l$, and define the number
\begin{equation}\label{def of mi}
m_i:=t(x_0+r_i)-r(y_0+t_i)\,.
\end{equation}
Observe that $m_i$ is determined by $x$.

Consider the $n$ lattice points $P_0, P_1,\dots,P_{n-1}$ located at column $i$ in the previous table, of the form
$$
P_k=(x_0+r_i+kr\,,\,y_0+t_i+kt),\quad 0\le k\le n-1.
$$

We count now the number of $P_k$ that are visible.

If $m_i=0$, then $t\,(x_0+r_i)=r\,(y_0+t_i)$, and thus each point $P_k$ can be written as
$$
P_k=\Big(x_0+r_i+kr\,,\,\frac{t}{r}(x_0+r_i+kr)\Big)=\Big(\frac{x_0+r_i+kr}{r/\gcd(r,t)}\, \frac{r}{\gcd(r,t)}\,,\,\frac{x_0+r_i+kr}{r/\gcd(r,t)}\, \frac{t}{\gcd(r,t)}\Big)
$$
Since $\frac{x_0+r_i+kr}{r/\gcd(r,t)}>1$ for $k\ge 1$, this shows that each $P_k$, $k\ge 1$, is non visible in this case, and so the relative density of visible points is $\delta_i=0$ or $\delta_i=1/n$.

For $m_i\ne 0$, we write the number of visible points $P_k$ as
$$\sum_{0\le k\le n-1}\uno_{\gcd(x_0+r_i+kr,y_0+t_i+kt)=1}=\sum_{0\le k\le n-1}\ \sum_{d\mid \gcd(x_0+r_i+kr,y_0+t_i+kt)}\mu(d).$$

Notice that if $d\mid (x_0+r_i+kr)$ and $d\mid (y_0+t_i+kt)$, then $d\mid m_i$, so we can write the sum above as
\begin{align*}
\sum_{d\mid m_i}\mu(d)\, &\#\{k: 0\le k\le n-1,\ x_0+r_i+kr\equiv y_0+t_i+kt\equiv 0\pmod d\}
\\
	&\qquad
=\sum_{d\mid m_i}\mu(d)\Big(\frac ndS_i(d)+O(d)\Big)=n\sum_{d\mid m_i}\frac{\mu(d)S_i(d)}d+O\Big(\sum_{d\mid m_i} d\Big),
\end{align*}
where $S_i(d)$ is the number of solutions $k \pmod d$ of
$$x_0+r_i+kr\equiv y_0+t_i+kt\equiv 0\pmod d.$$
By the Chinese remainder theorem, the function $S_i(d)$ is a multiplicative function, and then the relative density $\delta_i$ is
$$\delta_i=\sum_{d\mid m_i}\frac{\mu(d)S_i(d)}d=\prod_{p\mid m_i}\left (1-\frac{S_i(p)}p\right ).$$

Fix $p\mid m_i$.

If $p\nmid r$ and $p\nmid t$ then $S_i(p)=1$. To see this we observe that $x_0+r_i+kr\equiv 0\pmod p\iff k\equiv -(x_0+r_i)r^{-1}\pmod p$, and that $y_0+t_i+kt\equiv 0\pmod p\iff k\equiv -(y_0+t_i)t^{-1}\pmod p$. Since $p\mid m_i$, we have that indeed $-(x_0+r_i)r^{-1}\equiv -(y_0+t_i)t^{-1}\pmod p$.

If $p\mid r$ and $p\nmid t$,  then $p\mid (x_0+r_i).$ This implies that $x_0+r_i+kr\equiv 0\pmod p$ for all $k$ and then  the lattice point is visible when $k\equiv (-y_0+t_i)t^{-1}\pmod p$. Thus, $S_i(p)=1$ in this case.
If $p\mid t$ and $p\nmid r$ we also have that $S_i(p)=1$.

Finally, if $p\mid r$ and $p\mid t$, then $S_i(p)=0$ except when $p\mid (x_0+r_i)$ and $p\mid (y_0+t_i)$ simultaneously. In this case we have $S_i(p)=p$.

Upon considering the cases above, we get that
\begin{equation}
\label{deltai}
\delta_i=\prod_{p\mid m_i}\left (1-\frac 1p\right )\prod_{p\mid \gcd(m_i,r,t)}\frac{\epsilon_i(p)}{(1-1/p)},
\end{equation}
where $\epsilon_i(p)=1$ if $p\nmid \gcd(m_i,r,t,x_0+r_i,y_0+t_i)$, and $\epsilon_i(p)=0$ otherwise.


Thus, the number of $0\le k\le n-1$ satisfying the condition above is
	$$
	n\delta_i+O(1).
	$$
	Summing up for each $i=1,\dots ,l$, we have that
	$$\overline S_{ln}(x)=\frac{1}l\sum_{i=1}^l\delta_i+O(1/n).$$
	Since for all $m=0,\dots,l-1$ we have that $$\overline S_{ln}(x)\frac{ln}{ln+m}\le \overline S_{ln+m}(x)\le \overline S_{ln}(x)\frac{ln}{ln+m}+\frac m{ln+m}$$ we conclude that
	\begin{equation}\label{proportion}
		\lim_{n\to \infty}\overline S_n(x)=\frac 1l\sum_{i=1}^l\delta_i,
	\end{equation}
	that is clearly a rational number.
\end{proof}

The proof above contains a procedure for calculating the proportion of visible points visited by the walk associated to a rational number $x$. Consider, as an illustration, a periodic part of the form $\overline{0110}$. This means, in the notation of the proof, that $r=t=2$. 

Consider, for example, the rational number $x=0\mbox{.}1000\,\overline{0110}$, in which the aperiodic part leads to the lattice point $(x_0, y_0)=(1, 3)$. In this case the $|m_i|$'s are $6$, $4$, $2$ and $4$. Then the proportion is, following  \eqref{deltai} and \eqref{proportion},
$$
\frac{1}{4}\Big[\Big(1-\frac{1}{3}\Big)+0+1+1\Big]=\frac{2}{3}.
$$
Notice that no factors $(1-1/2)$ do appear, because $r$, $t$ and the $m_i$'s are even numbers, and that the term $0$ appears because $\epsilon_2(2)=0$ in this case.

Compare with the case $x=0\mbox{.}10000\,\overline{0110}$, that corresponds to $(x_0, y_0)=(1, 4)$, and for which the $|m_i|$'s are $8$, $6$, $4$ and $6$. Now the proportion is
$$
\frac{1}{4}\Big[1+\Big(1-\frac{1}{3}\Big)+1+\Big(1-\frac{1}{3}\Big)\Big]=\frac{5}{6}.
$$
Or with the number $x=0\mbox{.}1000\,\overline{0111}$, with periodic part corresponding to $r=3$ and $t=1$, and aperiodic part leading to the point $(x_0,y_0)=(1,3)$. Now the $|m_i|$'s are $11$, $10$, $9$ and $8$, and the proportion turns out to be
$$
\frac{1}{4}\Big[\Big(1-\frac{1}{11}\Big)+\Big(1-\frac{1}{2}\Big)\Big(1-\frac{1}{5}\Big)+\Big(1-\frac{1}{3}\Big)+\Big(1-\frac{1}{2}\Big)\Big]=\frac{817}{1320}.
$$

\end{document}